\newcommand{\ud}{\mathrm{d}}
\newcommand{\ii}{\mathrm{i}}
\newcommand{\cH}{\mathcal{H}}
\theoremstyle{plain}
\newtheorem{theorem}{Theorem}[section]
\newtheorem{corollary}[theorem]{Corollary}
\theoremstyle{definition}
\newtheorem{remark}[theorem]{Remark}
\numberwithin{equation}{section}
\begin{document}

\title[On absolute vs relative self-adjoint extension schemes]
{On a comparison between absolute and relative self-adjoint extension schemes}
\author[N.~A.~Caruso]{No\`e Angelo Caruso}
\address[N.~A.~Caruso]{Mathematical Institute, Silesian University in Opava (Czech Republic)}
\email{noe.caruso@math.slu.cz}
\author[A.~Michelangeli]{Alessandro Michelangeli}
\address[A.~Michelangeli]{Department of Mathematics and Natural Sciences, Prince Mohammad Bin Fahd University \\ Al Khobar 31952 (Saudi Arabia) \\
and Hausdorff Center for Mathematics, University of Bonn \\ Endenicher Allee 60 \\ D-53115 Bonn (Germany) \\ and TQT Trieste Institute for Theoretical Quantum Technologies, Trieste (Italy)}
\email{amichelangeli@pmu.edu.sa}
\author[A.~Ottolini]{Andrea Ottolini}
\address[A.~Ottolini]{Department of Mathematics, University of Washington \\ C-138 Padelford \\
Seattle, WA 98195-4350 (USA)}
\email{ottolini@uw.edu}

%\dedicatory{}

\begin{abstract}
 The problem of connecting the operator parameters that label the same self-adjoint extension of a given symmetric operator, respectively, within the `absolute' von Neumann extension scheme and the `relative' boundary-triplet-induced extension scheme (i.e., a la Kre{\u\i}n-Vi\v{s}ik-Birman) is discussed, and quantitative connections between the two parameters are established in the limit of deficiency spaces at complex spectral points converging to the deficiency space at a real spectral point.
 \end{abstract}

\date{\today}

\subjclass[2010]{47B25, 47B93}

% 46N50 Applications of functional analysis in quantum physics
% 47B02 Operators on Hilbert spaces (general
% 47B25 Linear symmetric and selfadjoint operators (unbounded
% 47B93 Operators arising in mathematical physi
% 47N20 Applications of operator theory to differential and integral equations
% 47N50 Applications of operator theory in the physical sciences
% 81Q10 Selfadjoint operator theory in quantum theory, including spectral analysi

\keywords{Self-adjoint operators on Hilbert space; self-adjoint extensions; von Neumann's extension theory, Kre{\u\i}n-Vi\v{s}ik-Birman extension theory; extension parameters; boundary triplets}

\thanks{Partially supported by the Italian National Institute for Higher Mathematics INdAM and the Alexander von Humboldt Foundation, Bonn.. A.~M.~is most grateful to the Mathematical Institute at the Silesian University in Opava for the kind hospitality during the period in which this project was completed.
}

\maketitle

%\tableofcontents

\section{Introduction}\label{sec:intro-main}

Consider a complex Hilbert space $\cH$, with scalar product $\langle\cdot,\cdot\rangle_{\cH}$ by convention linear in the second entry and anti-linear in the first, and with norm $\|\cdot\|_{\cH}$. Consider also a densely defined symmetric operator $S$ on $\cH$, possibly non-closed or unbounded. $S$ is then surely closable, and make on it the additional assumption that the resolvent set $\rho(\overline{S})$ of the operator closure $\overline{S}$ of $S$ contains a real point -- for concreteness the point zero, up to a shift of $S$ by a real multiple of the identity. Under these conditions a classical result by Calkin \cite[Theorem 2]{Calkin-1940} ensures that $S$ admits self-adjoint extensions, among which a distinguished extension $S_{\mathrm{D}}$ that has everywhere defined and bounded inverse on $\cH$. Moreover, another classical result by Krasnosel'ski\u{\i} and Kre{\u\i}n \cite{Krein-Krasnoselskii-Milman-1948} guarantees that the cardinal number 
\begin{equation}
 d(S)\;:=\;\mathrm{dim}\ker(S^*-z\mathbbm{1})\,,
\end{equation}
the deficiency index of $S$, is constant for each $z\in\mathbb{C}^+\cup\{0\}\cup\mathbb{C}^-$, i.e.,
\begin{equation}\label{eq:samedim}
 \mathrm{dim}\ker S^*\;=\;\mathrm{dim}\ker(S^*-z\mathbbm{1})\;=\;\mathrm{dim}\ker(S^*-\overline{z}\mathbbm{1})\qquad\forall z\in\mathbb{C}^+\,.
\end{equation}
Each of the closed subspaces of $\cH$ appearing in \eqref{eq:samedim} is a deficiency subspace of $S$ at the considered spectral point, and in fact according to von Neumann's extension theory \cite{vonNeumann1930} the identity $\mathrm{dim}\ker(S^*-z\mathbbm{1})=\mathrm{dim}\ker(S^*-\overline{z}\mathbbm{1})$ for one, and hence for all, $z\in\mathbb{C}^+$ is necessary and sufficient for $S$ to admit self-adjoint extensions on the same Hilbert space (see, e.g., \cite[Chapters 1 and 2]{GM-SelfAdj_book-2022} for details).

A paradigmatic example of the above set-up, that we shall keep in mind throughout, is the case when $S$ is densely defined and lower semi-bounded (hence, symmetric) on $\cH$, with strictly positive lower bound, i.e.,
\begin{equation}\label{eq:positivebottom}
 \mathfrak{m}(S)\;:=\;\inf_{\substack{f\in\mathcal{D}(S) \\ f\neq 0}}  \frac{\langle f,S f\rangle_{\cH}}{\|f\|^2_{\cH}}\;>\;0\,.
\end{equation}
 Such $S$ admits the Friedrichs extension $S_\mathrm{F}$, that distinguished self-adjoint extension of $S$ characterised by the property of being the largest, in the sense of operator ordering (see, e.g., \cite[Theorem 2.1]{GM-SelfAdj_book-2022}). One has $\mathfrak{m}(S_\mathrm{F})= \mathfrak{m}(S)>0$; thus, $S_\mathrm{F}$ has everywhere-defined and bounded inverse.

 In the above picture, it is known by von Neumann's theory that either the operator closure $\overline{S}$ is self-adjoint (i.e., $S$ is essentially self-adjoint), or $\overline{S}$, and hence $S$, admits an infinite family of distinct self-adjoint extensions, beside the distinguished one $S_\mathrm{D}$ (or $S_\mathrm{F}$ in the prototypical lower semi-bounded case), depending on the triviality or non-triviality of the dimension \eqref{eq:samedim}.

 In the non-trivial case of \emph{non-zero} deficiency index, there are two classical and alternative ways to parametrise the self-adjoint extension family for $S$, each one yielding a classification of all possible extensions.

 A first parametrisation is, in a sense, `absolute' and `non-canonical', and is provided by von Neumann's extension theory: for fixed $z\in\mathbb{C}^+$ there is a one-to-one correspondence $S_U\leftrightarrow U$ between self-adjoint extensions $S_U$ of $S$ in $\cH$ and unitary isomorphisms $U:\ker(S^*-z\mathbbm{1})\xrightarrow{\cong}\ker(S^*-\overline{z}\mathbbm{1})$, and the extension labelled by one such $U$ is given by
 \begin{equation}\label{eq:vNparam}
\begin{split}
\mathcal{D}(S_U)\;&:=\;\mathcal{D}(\overline{S})\,\dotplus\,(\mathbbm{1}-U)\ker(S^*-z\mathbbm{1})\,, \\
S_U(f+u-Uu)\;&:=\;S^*(f+u-Uu)\;=\;\overline{S}f+zu-\overline{z}\,Uu\,.
\end{split}
\end{equation}
 (see, e.g., \cite[Section 2.3]{GM-SelfAdj_book-2022} for details).

 A second alternative parametrisation is `relative' and `canonical', in that it canonically identifies each self-adjoint extension with respect to the distinguished extension $S_\mathrm{D}$. Although this is a parametrisation that stems from the old  Kre{\u\i}n-Vi\v{s}ik-Birman extension theory (we refer for details to the original works \cite{Krein-1947,Vishik-1952,Birman-1956}, as well as to \cite{KM-2015-Birman}, and to \cite[Section 2.7]{GM-SelfAdj_book-2022} and \cite[Theorem 2.17]{GM-SelfAdj_book-2022} in particular), and was later made explicit within Grubb's universal extension theory \cite{Grubb-1968} (see \cite[Chapter 13]{Grubb-DistributionsAndOperators-2009} and \cite[Section 4]{GMO-KVB2017} for details), it is today somehow fashionable to formulate it within the framework of modern boundary triplets theory \cite{Behrndt-Hassi-deSnoo-boundaryBook}. So, consider the classical decomposition formula 
 \begin{equation}\label{eq:II-VishBirDecomp}
   \mathcal{D}(S^*)\;=\;\mathcal{D}(\overline{S})\dotplus S_\mathrm{D}^{-1}\ker S^*\dotplus \ker S^* 
 \end{equation}
 (established by Vi\v{s}ik \cite[Eq.~(1.41)]{Vishik-1952} and Birman \cite[Theorem 1]{Birman-1956} in the special case \eqref{eq:positivebottom}, and straightforwardly re-provable for the present general setting -- see, e.g., \cite[Proposition 2.2 and 2.3]{GM-SelfAdj_book-2022}). One sees that \eqref{eq:II-VishBirDecomp} induces two maps $\Gamma_0,\Gamma_1:\mathcal{D}(S^*)\to\ker S^*$ defined on generic $g=f+S_\mathrm{D}^{-1} u_1+u_0\in\mathcal{D}(S^*)$ by
 \begin{equation}
  \begin{split}
  \Gamma_0 g\;&:=\;u_0\;=\; g-S_\mathrm{D}^{-1}S^*g\,, \\
   \Gamma_1 g\;&:=\;u_1\;=\;P_{\ker S^*}S^* g 
  \end{split}
 \end{equation}
 ($P_{\ker S^*}$ denoting the orthogonal projection from $\cH$ onto $\ker S^*$). The triplet 
 \begin{equation}\label{eq:thetriplet}
  (\ker S^*,\Gamma_0,\Gamma_1)
 \end{equation}
 is recognised to be an actual boundary triplet for $S^*$ (see, e.g., \cite[Example 14.6]{schmu_unbdd_sa}) for which the general boundary-triplet-based classification theorem of self-adjoint extensions (see, e.g., \cite[Theorem 14.10]{schmu_unbdd_sa}) produces the following parametrisation (see, e.g., \cite[Theorem 14.12]{schmu_unbdd_sa}): there is a one-to-one correspondence $S_T\leftrightarrow T$ between self-adjoint extensions $S_T$ of $S$ and self-adjoint operators $T$ acting in a closed subspace of $\ker S^*$, and the extension labelled by one such $T$ is given by
 \begin{equation}\label{eq:ST-2}
\begin{split}
\mathcal{D}(S_T)\;&:=\;\left\{f+S_\mathrm{D}^{-1}(Tu+w)+u\left|
\begin{array}{c}
f\in\mathcal{D}(\overline{S})\,,\;u\in\mathcal{D}(T) \\
w\in\ker S^*\cap\mathcal{D}(T)^\perp
\end{array}
\right.\right\}, \\
S_T\;&=\;S^*\upharpoonright\mathcal{D}(S_T)\,,
\end{split}
\end{equation}
i.e.,
\begin{equation}
 S_T(f+S_\mathrm{D}^{-1}(Tu+w)+u)\;=\;\overline{S}f+Tu+w\,.
\end{equation}

 The purpose of this note is to provide \emph{a connection between the parametrisations \eqref{eq:vNparam} and \eqref{eq:ST-2}} that be at the same time conceptual, in the sense of structural links between the two pictures, and quantitative, in fact through an explicit limit procedure that we are going to discuss.

 Underlying the above general goal there is a two-fold question. On the one hand, for a generic self-adjoint extension $\widetilde{S}$ of $S$ labelled respectively as $\widetilde{S}=S_U$ according to \eqref{eq:vNparam} (for given deficiency subspaces $\ker(S^*-z\mathbbm{1})$ and $\ker(S^*-\overline{z}\mathbbm{1})$) and $\widetilde{S}=S_T$ according to \eqref{eq:ST-2}, one inquires on the connection between $U$ and $T$, each being 	uniquely determined by $\widetilde{S}$ (observe that such two labelling operators are completely different in nature -- a unitary and a self-adjoint, and acting in different spaces). On the other hand, an analogous issue can be posed point-wise, thus inquiring how to connect the canonical components of an element $g\in\mathcal{D}(\widetilde{S})$ written as 
 \[
  f+S_{\mathrm{D}}^{-1}(Tu+w)+u\;=\;g\;=\;\widetilde{f}+\widetilde{u}-U\widetilde{u}
 \]
 with self-explanatory notation with respect to \eqref{eq:vNparam} and \eqref{eq:ST-2}.

 It is fairly natural to expect that the von Neumann extension picture \eqref{eq:vNparam} takes approximately the form of \eqref{eq:ST-2} when the non-real spectral point $z$ chosen to identify the deficiency subspaces is close to the spectral point zero, as the comparison between $\ker(S^*-z\mathbbm{1})$ and $\ker S^*$ suggests. We shall discuss this by choosing $z=\ii\varepsilon$, $\varepsilon>0$, in the limit $\varepsilon\downarrow 0$ and we shall see that it is already at the level of the adjoint $S^*$ that there is an explicit connection between the decomposition of $\mathcal{D}(S^*)$ at the spectral points $z=\ii\varepsilon$ and at $z=0$. This is the object of Section \ref{sec:impretriplet}. Then in Section \ref{sec:convparam} the actual convergence of the two self-adjoint extension parametrisations is shown, connecting $U$ and $T$ as $\varepsilon\downarrow 0$. Two elucidative examples of our results are discussed in Section \ref{eq:examples}.

 \section{Imaginary pre-triplet for $S^*$}\label{sec:impretriplet}
 
 Preliminary to the actual comparison between the self-adjoint extension parametrisations \eqref{eq:vNparam} and \eqref{eq:ST-2}, we discuss the connection between the decompositions of $\mathcal{D}(S^*)$ at real and at imaginary spectral points.

 The following shall be assumed:
 \begin{equation}\label{eq:assumptions}
  \begin{split}
   & \textrm{$S$ is a densely defined symmetric operator in $\cH$} \\
   & \textrm{such that }\quad 0\in\rho(\overline{S})\quad\textrm{ and }\quad d(S)\geqslant 1\,,
  \end{split}
 \end{equation}
 $d(S)$ being the deficiency index defined in \eqref{eq:positivebottom}. As already mentioned, a special (and frequent in applications) version of \eqref{eq:assumptions} is
 \begin{equation}\label{eq:assumptions-lowerbdd}
 \begin{split}
  & \textrm{$S$ is a densely defined and lower-semi-bounded operator in $\cH$} \\
  & \textrm{such that }\quad \mathfrak{m}(S)\;=\inf_{\substack{f\in\mathcal{D}(S) \\ u\neq 0}}  \frac{\langle f,S f\rangle_{\cH}}{\|f\|^2_{\cH}}\;>\;0\;\;\textrm{ and }\;\; d(S)\geqslant 1\,.
 \end{split}
 \end{equation}
 In either case $S$ admits an infinite family of distinct self-adjoint extensions, including a distinguished one, denoted by $S_\mathrm{D}$, having everywhere defined and bounded inverse in $\cH$. In the special case \eqref{eq:assumptions-lowerbdd} the role of distinguished extension is played by the Friedrichs extension $S_\mathrm{F}$ of $S$.

 For arbitrary $\varepsilon>0$, under \eqref{eq:assumptions} one has
 \begin{equation}\label{eq:vNformula}
  \mathcal{D}(S^*)\;=\;\mathcal{D}(\overline{S})\dotplus\ker(S^*-\ii\varepsilon\mathbbm{1})\dotplus\ker(S^*+\ii\varepsilon\mathbbm{1})
 \end{equation}
 and also
 \begin{equation}\label{eq:VBdec}
  \mathcal{D}(S^*)\;=\;\mathcal{D}(\overline{S})\dotplus S_\mathrm{D}^{-1}\ker S^*\dotplus \ker S^*\,.
 \end{equation}
 The decomposition \eqref{eq:vNformula} is the classical von Neumann formula at the spectral point $z=\ii\varepsilon$, and in fact it is valid for generic densely defined and symmetric $S$; \eqref{eq:VBdec} is in practice the classical Vi\v{s}ik-Birman decomposition (originally established in the special case \eqref{eq:assumptions-lowerbdd}).

 We have already introduced the two boundary maps
 \begin{equation}\label{eq:defmaps}
  \Gamma_0\;=\;\mathbbm{1}-S_\mathrm{D}^{-1}S^*\,,\qquad \Gamma_1\;=\;P_{\ker S^*}S^*\,,
 \end{equation}
 as $\mathcal{D}(S^*)\to\ker S^*$ maps, and next to them let us define
 \begin{equation}\label{eq:defmapseps}
  \begin{split}
   \Gamma_{0,\varepsilon}\;&:=\;\Gamma_0\,\Upsilon_\varepsilon\,, \\ 
   \Upsilon_\varepsilon &:=\;\frac{\Gamma_{1,\varepsilon}^- - \Gamma_{1,\varepsilon}^+}{2\,\ii\,\varepsilon}\,, \\
   \Gamma_{1,\varepsilon}^-\;&:=\;P_{\ker(S^*-\ii\varepsilon)}(S^*+\ii\varepsilon\mathbbm{1})\,, \\
   \Gamma_{1,\varepsilon}^+\;&:=\;P_{\ker(S^*+\ii\varepsilon)}(S^*-\ii\varepsilon\mathbbm{1})\,,
  \end{split}
 \end{equation}
 again defined on $\mathcal{D}(S^*)$, and with the customary notation for orthogonal projections $P_W$ onto closed subspaces $W$ of $\cH$.

 In view of the results that will follow, it makes sense to refer to
 \begin{equation}
  \big(\ker(S^*+\ii\varepsilon\mathbbm{1})\,,\,\Gamma_{0,\varepsilon} \,,\,\Gamma_{1,\varepsilon}^-\big)
 \end{equation}
 (a concise form for the actual quintuple consisting of $\ker(S^*-\ii\varepsilon\mathbbm{1})$, $\ker(S^*+\ii\varepsilon\mathbbm{1})$, $\Gamma_{0,\varepsilon}$, $\Gamma_{1,\varepsilon}^-$, and $\Gamma_{1,\varepsilon}^+$) as the \emph{imaginary pre-triplet} associated, at the chosen $\varepsilon$, to the boundary triplet $(\ker S^*,\Gamma_0,\Gamma_1)$ already encountered in \eqref{eq:thetriplet} above.

 One has indeed the following.

 \begin{theorem}\label{thm:mapsconvergence}
  Under the assumptions \eqref{eq:assumptions} one has:
  \begin{enumerate}
   \item[(i)] $\big\| P_{\ker(S^*\pm\ii\varepsilon\mathbbm{1})})-P_{\ker S^*}\big\|_{\mathrm{op}}\,\leqslant\,\varepsilon\,\|\overline{S}^{-1}\|_{\mathrm{op}}$\,;   \\
   in particular, $P_{\ker(S^*\pm\ii\varepsilon\mathbbm{1})}\xrightarrow{\;\varepsilon\downarrow 0\;} P_{\ker S^*}$ in operator norm.
   \item[(ii)] $\|\Gamma_{1,\varepsilon}^\pm\,g-\Gamma_1 g\|_{\cH}\,\leqslant\,\varepsilon\big(\|g\|_{\cH}+\|\overline{S}^{-1}\|_{\mathrm{op}}\,\|S^*g\|_{\cH}\big)$ for any $g\in\mathcal{D}(S^*)$; \\ in particular, $\Gamma_{1,\varepsilon}^\pm\xrightarrow{\;\varepsilon\downarrow 0\;}\Gamma_1$ strongly on $\mathcal{D}(S^*)$.
   \item[(iii)] $\|\Gamma_{0,\varepsilon}g - \Gamma_0 g\|_{\cH}\,\leqslant\,\varepsilon\,\big(\|\overline{S}^{-1}\|_{\mathrm{op}}+\|S_\mathrm{D}^{-1}\|_{\mathrm{op}}\big)\big(\|g\|_{\cH}+\|\overline{S}^{-1}\|_{\mathrm{op}}\,\|S^*g\|_{\cH}\big)$ for any $g\in\mathcal{D}(S^*)$; in particular,   
   $\Gamma_{0,\varepsilon}\xrightarrow{\;\varepsilon\downarrow 0\;}\Gamma_0$ strongly on $\mathcal{D}(S^*)$.
  \end{enumerate}
 \end{theorem}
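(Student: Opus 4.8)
The plan is to prove (i) by a subspace–gap estimate, deduce (ii) from it in a line, and reduce (iii) to an exact identity. For (i) I would use $\ker S^*=(\ran\overline S)^\perp$ and $\ker(S^*\mp\ii\varepsilon\mathbbm{1})=(\ran(\overline S\pm\ii\varepsilon\mathbbm{1}))^\perp$, observing that $\overline S$ and $\overline S\pm\ii\varepsilon\mathbbm{1}$ are bounded below — by symmetry $\|(\overline S\pm\ii\varepsilon\mathbbm{1})u\|_{\cH}^2=\|\overline Su\|_{\cH}^2+\varepsilon^2\|u\|_{\cH}^2$ — so that all these ranges are closed. Since for orthogonal projections $\|P_M-P_N\|_{\mathrm{op}}=\max\{\delta(M,N),\delta(N,M)\}$, with $\delta(M,N)=\sup_{x\in M,\,\|x\|_{\cH}=1}\mathrm{dist}(x,N)$, it is enough to bound $\mathrm{dist}(u,\ker S^*)=\|P_{\ran\overline S}\,u\|_{\cH}$ for a unit $u\in\ker(S^*-\ii\varepsilon\mathbbm{1})$ and the reverse quantity. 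Writing $P_{\ran\overline S}u=\overline Sw$ with $\|w\|_{\cH}\leqslant\|\overline S^{-1}\|_{\mathrm{op}}\,\|P_{\ran\overline S}u\|_{\cH}$, everything rests on
\[
 \|P_{\ran\overline S}u\|_{\cH}^2=\langle P_{\ran\overline S}u,u\rangle_{\cH}=\langle\overline Sw,u\rangle_{\cH}=\langle w,S^*u\rangle_{\cH}=\ii\varepsilon\,\langle w,u\rangle_{\cH},
\]
whence $\|P_{\ran\overline S}u\|_{\cH}\leqslant\varepsilon\|\overline S^{-1}\|_{\mathrm{op}}\|u\|_{\cH}$; the reverse direction (a unit $v\in\ker S^*$, with $P_{\ran(\overline S+\ii\varepsilon\mathbbm{1})}v=(\overline S+\ii\varepsilon\mathbbm{1})w'$ and $S^*v=0$) and the $\ker(S^*+\ii\varepsilon\mathbbm{1})$ case are identical, and taking the maximum gives (i).

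For (ii) I would simply insert the projections,
\[
 \Gamma_{1,\varepsilon}^-g-\Gamma_1g=\big(P_{\ker(S^*-\ii\varepsilon\mathbbm{1})}-P_{\ker S^*}\big)S^*g+\ii\varepsilon\,P_{\ker(S^*-\ii\varepsilon\mathbbm{1})}g,
\]
and bound the first summand by (i) (yielding $\varepsilon\|\overline S^{-1}\|_{\mathrm{op}}\|S^*g\|_{\cH}$) and the second by $\varepsilon\|g\|_{\cH}$, as projections are contractions; the $\Gamma_{1,\varepsilon}^+$ case differs only in signs, and strong convergence is immediate.

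The decisive step is (iii), which I expect to collapse to the \emph{exact} equality $\Gamma_{0,\varepsilon}=\Gamma_0$. Decomposing $g$ via the von Neumann formula \eqref{eq:vNformula} at $\ii\varepsilon$, $g=f_0+u_++u_-$ with $f_0\in\mathcal D(\overline S)$, $u_+\in\ker(S^*-\ii\varepsilon\mathbbm{1})$, $u_-\in\ker(S^*+\ii\varepsilon\mathbbm{1})$, the mismatched signs built into $\Gamma_{1,\varepsilon}^\pm$ cause a clean cancellation: $(S^*+\ii\varepsilon\mathbbm{1})u_-=0$, $(S^*+\ii\varepsilon\mathbbm{1})u_+=2\ii\varepsilon\,u_+$, and $(S^*+\ii\varepsilon\mathbbm{1})f_0=(\overline S+\ii\varepsilon\mathbbm{1})f_0\perp\ker(S^*-\ii\varepsilon\mathbbm{1})$, so that $\Gamma_{1,\varepsilon}^-g=2\ii\varepsilon\,u_+$ and, symmetrically, $\Gamma_{1,\varepsilon}^+g=-2\ii\varepsilon\,u_-$. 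Hence $\Upsilon_\varepsilon g=u_++u_-$: the operator $\Upsilon_\varepsilon$ is precisely the skew (von Neumann) projection of $\mathcal D(S^*)$ onto $\ker(S^*-\ii\varepsilon\mathbbm{1})\dotplus\ker(S^*+\ii\varepsilon\mathbbm{1})$ along $\mathcal D(\overline S)$. Because $\Gamma_0=\mathbbm{1}-S_\mathrm{D}^{-1}S^*$ annihilates $\mathcal D(\overline S)$ (for $f\in\mathcal D(\overline S)$ one has $\Gamma_0f=f-S_\mathrm{D}^{-1}\overline Sf=0$), this gives $\Gamma_{0,\varepsilon}g=\Gamma_0(u_++u_-)=\Gamma_0(g-f_0)=\Gamma_0g$; the left-hand side of (iii) is then zero and the stated bound holds a fortiori.

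The one genuine obstacle is spotting this cancellation, which cannot be reached from (i)--(ii) alone. Writing $\Gamma_{0,\varepsilon}g-\Gamma_0g=\Gamma_0(\Upsilon_\varepsilon g-g)$ and estimating $\|\Gamma_0h\|_{\cH}\leqslant\|h\|_{\cH}+\|S_\mathrm{D}^{-1}\|_{\mathrm{op}}\|S^*h\|_{\cH}$ is useless, since $\Upsilon_\varepsilon g$ does \emph{not} tend to $g$ (in fact $\Upsilon_\varepsilon g\to S_\mathrm{D}^{-1}\Gamma_1g+\Gamma_0g$), so neither $\Upsilon_\varepsilon g-g$ nor $S^*(\Upsilon_\varepsilon g-g)$ is small; the vanishing of $\Gamma_{0,\varepsilon}g-\Gamma_0g$ is an exact algebraic cancellation inside $\Gamma_0$ that this triangle-inequality splitting destroys. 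Accordingly I would present (iii) as the identity $\Gamma_{0,\varepsilon}\equiv\Gamma_0$, of which the displayed inequality is an immediate (if unsharp) consequence.
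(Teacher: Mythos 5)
Your proof is correct, and in part (iii) it establishes strictly more than the paper claims, by a genuinely different route. Parts (i)--(ii) match the paper in substance: for (i) the paper estimates the gap between the ranges $\mathrm{ran}(\overline S\mp\ii\varepsilon\mathbbm{1})$ and $\mathrm{ran}\,\overline S$ by testing vectors $(\overline S\mp\ii\varepsilon\mathbbm{1})f$ against $\overline Sf$ and then passes to the projections via Kato's identification of the gap with the projection-norm distance, while you run the dual computation directly on the kernels via the pairing $\|P_{\mathrm{ran}\overline S}\,u\|_{\cH}^2=\langle w,S^*u\rangle_{\cH}=\ii\varepsilon\langle w,u\rangle_{\cH}$; both rest on the same lower bound $\|\overline Sf\|_{\cH}\geqslant\|\overline S^{-1}\|_{\mathrm{op}}^{-1}\|f\|_{\cH}$ and yield the same constant, and your (ii) is the paper's one-line argument verbatim. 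The genuine difference is (iii): your exact identity $\Gamma_{0,\varepsilon}\equiv\Gamma_0$ is right. Indeed $\Upsilon_\varepsilon g=u_\varepsilon-v_\varepsilon=g-f_\varepsilon$ (the paper's own \eqref{eq:uepsminusveps}), and $\Gamma_0=\mathbbm{1}-S_\mathrm{D}^{-1}S^*$ is linear and annihilates $\mathcal{D}(\overline S)$ because $S_\mathrm{D}\supset\overline S$ and $S_\mathrm{D}^{-1}$ is everywhere defined; hence $\Gamma_{0,\varepsilon}g=\Gamma_0(g-f_\varepsilon)=\Gamma_0 g$ for every $g\in\mathcal{D}(S^*)$ and every $\varepsilon>0$. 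The paper instead proves (iii) through the chain (a)--(f) and the final splitting of $\Gamma_{0,\varepsilon}g-\Gamma_0 g$ into $\big(\Upsilon_\varepsilon g-(S_\mathrm{D}^{-1}u_1+u_0)\big)-\big(S_\mathrm{D}^{-1}S^*\Upsilon_\varepsilon g-S_\mathrm{D}^{-1}u_1\big)$, estimating each bracket by $O(\varepsilon)$; your diagnosis that this triangle-inequality step discards an exact cancellation is confirmed by the paper's own formulas, since $S^*\Upsilon_\varepsilon g=S^*g-\overline Sf_\varepsilon=\overline Sf+u_1-\overline Sf_\varepsilon$ makes \emph{both} brackets equal to $f-f_\varepsilon$, so their difference vanishes identically. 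The stated bound in (iii) is therefore true but unsharp (its left-hand side is zero), and the same observation upgrades the first limit of Corollary \ref{cor:limitsSstar} and the limit \eqref{eq:limtoug} to exact equalities without $O(\varepsilon)$ correction. As for what each approach buys: the paper's intermediate estimates (e)--(f) are not wasted, since they are exactly what is needed for the genuinely non-trivial limits $f_\varepsilon\to f$ and $\Upsilon_\varepsilon g\to S_\mathrm{D}^{-1}u_1+u_0$ (which, as you correctly note, do carry honest $O(\varepsilon)$ errors), whereas your route isolates the sharper structural fact that $\Gamma_{0,\varepsilon}$ is not an $\varepsilon$-approximation of $\Gamma_0$ at all but coincides with it.
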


 Concerning the proof of part (i), in view of the Hilbert space orthogonal decomposition
 \begin{equation}
  \cH\;=\;\mathrm{ran}\overline{S}\oplus\ker S^*\;=\;\mathrm{ran}(\overline{S}\mp\ii\varepsilon\mathbbm{1})\oplus\ker(S^*\pm\ii\varepsilon\mathbbm{1})
 \end{equation}
 (owing to assumptions \eqref{eq:assumptions} the spaces $\mathrm{ran}\overline{S}$ and $\mathrm{ran}(\overline{S}\mp\ii\varepsilon\mathbbm{1})$ are closed), the claim is equivalent to $\big\|P_{\mathrm{ran}(\overline{S}\mp\ii\varepsilon\mathbbm{1})}-P_{\mathrm{ran}\overline{S}}\big\|_{\mathrm{op}}\,\leqslant\,\varepsilon\,\|\overline{S}^{-1}\|_{\mathrm{op}}$. In turn, it is known \cite[Chapt.~4, \S 2]{Kato-perturbation} that the latter claim is equivalent to the
 fact that the gap metric distance between $\mathrm{ran}(\overline{S}\mp\ii\varepsilon\mathbbm{1})$ and $\mathrm{ran}\overline{S}$ does not exceed $\varepsilon\,\|\overline{S}^{-1}\|_{\mathrm{op}}$, in the sense of the  gap metric $\widehat{\delta}$ between closed subspaces of $\cH$. Recall that for any two closed subspaces $U,V\subset\cH$ the gap $\widehat{\delta}(U,V)$ is defined by
 \begin{equation}\label{eq:dhat}
  \begin{split}
   \widehat{\delta}(U,V)\;&:=\;\max\{\delta(U,V),\delta(V,U)\}\,, \\
   \delta(U,V)\;&:=\;\sup_{\substack{ u\in U \\ \|u\|_{\cH}=1 }}\inf_{\substack{ v\in V}}\|u-v\|_{\cH}\;=\;\sup_{\substack{ u\in U \\ \|u\|_{\cH}=1 }}\mathrm{dist}(u,V)\,,
   %\;=\;\sup_{u\in S_U}\mathrm{dist}(u,V)\,,
  \end{split}
 \end{equation}
 and with the tacit assignment $\delta(\{0\},V):=0$.
 %, with customary shorthands $B_\cH$ for the closed unit ball of $\cH$, $S_\cH$ for the closed unit sphere, $B_U:=U\cap B_\cH$, $S_U:=U\cap S_\cH$. 
 This is the classical notion of `opening' between (closed) subspaces introduced by Kre\u{\i}n and Krasnosel$'$ski\u{\i} 
 \cite{Krein-Krasnoselskii-1947} (with generalisation to the Banach space setting -- see, e.g., \cite{Krein-Krasnoselskii-Milman-1948}, and \cite[\S 34]{Akhiezer-Glazman-1961-1993}): as a matter of fact \cite{Gohberg-Markus-1959}, the set of all closed subspaces of $\cH$ is complete in the metric $ \widehat{\delta}$.

 \begin{proof}[Proof of Theorem \ref{thm:mapsconvergence}, part (i)]
 The goal is to prove that 
 \[\tag{*}\label{deltahatlimit}
  \widehat{\delta}(\mathrm{ran}(\overline{S}\mp\ii\varepsilon\mathbbm{1}),\mathrm{ran}\overline{S}))\;\leqslant\;\varepsilon\,\|\overline{S}^{-1}\|_{\mathrm{op}}\,.
 \]
 Let us treat the `$-$' case, the `$+$' case being completely analogous. So one needs to control both
 \[%\tag{*}\label{eq:twodeltastoshow}
  \delta(\mathrm{ran}(\overline{S}-\ii\varepsilon\mathbbm{1}),\mathrm{ran}\overline{S})
  %\;\xrightarrow{\;\varepsilon\downarrow 0\;}\;0
  \qquad\textrm{and}\qquad \delta(\mathrm{ran}\overline{S},\mathrm{ran}(\overline{S}-\ii\varepsilon\mathbbm{1}))\,.
  %\;\xrightarrow{\;\varepsilon\downarrow 0\;}\;0\,.
 \]
  Concerning the first, consider an arbitrary $u=\overline{S}f-\ii\varepsilon f\in \mathrm{ran}(\overline{S}-\ii\varepsilon\mathbbm{1})$ for some $f\in\mathcal{D}(\overline{S})$, and such that $\|u\|_{\cH}=1$.
  By assumptions \eqref{eq:assumptions} one has $f=\overline{S}^{-1}\overline{S}f$, whence 
  \[
   \|\overline{S}f\|_{\cH}\;\geqslant\;\|\overline{S}^{-1}\|_{\mathrm{op}}^{-1}\,\|f\|_{\cH}\,.
  \]
  Using this, and the symmetry of $\overline{S}$, 
  \[
   1\;=\;\|u\|_{\cH}^2\;=\;\|(\overline{S}-\ii\varepsilon\mathbbm{1})f\|_{\cH}^2\;=\;\|\overline{S}f\|_{\cH}^2+\varepsilon^2\|f\|_{\cH}^2\;>\;\|\overline{S}^{-1}\|_{\mathrm{op}}^{-2}\,\|f\|_{\cH}^2\,,
  \]
  i.e.,
  \[
   \|f\|_{\cH}\;<\;\|\overline{S}^{-1}\|_{\mathrm{op}}\,.
  \]
  Therefore,
  \[
   \mathrm{dist}(u,\mathrm{ran}\overline{S})\;\leqslant\;\|u-\overline{S}f\|_{\cH}\;=\;\varepsilon\|f\|_{\cH}\;<\;\varepsilon\,\|\overline{S}^{-1}\|_{\mathrm{op}}\,,
  \]
  and 
   \[
    \delta(\mathrm{ran}(\overline{S}-\ii\varepsilon\mathbbm{1}),\mathrm{ran}\overline{S})\;=\;\sup_{\substack{ u\in \mathrm{ran}(\overline{S}-\ii\varepsilon\mathbbm{1}) \\ \|u\|_{\cH}=1 }}\mathrm{dist}(u,\mathrm{ran}\overline{S})\;\leqslant\;\varepsilon\,\|\overline{S}^{-1}\|_{\mathrm{op}}\,.
   \]
  Concerning the control of $\delta(\mathrm{ran}\overline{S},\mathrm{ran}(\overline{S}-\ii\varepsilon\mathbbm{1}))$, consider now an arbitrary $u=\overline{S}f\in\mathrm{ran}\overline{S}$ for some $f\in\mathcal{D}(\overline{S})$ and such that $\|u\|_{\cH}=1$. Then $\|f\|_{\cH}\leqslant\|\overline{S}^{-1}\|_{\mathrm{op}}$ and
  \[
   \mathrm{dist}(u,\mathrm{ran}(\overline{S}-\ii\varepsilon\mathbbm{1}))\;\leqslant\;\|u-(\overline{S}f-\ii\varepsilon f)\|_{\cH}\;=\;\varepsilon\|f\|_{\cH}\;\leqslant\;\varepsilon\,\|\overline{S}^{-1}\|_{\mathrm{op}}\,,
  \]
  whence
  \[
   \delta(\mathrm{ran}\overline{S},\mathrm{ran}(\overline{S}-\ii\varepsilon\mathbbm{1}))\;=\;\sup_{\substack{ u\in \mathrm{ran}\overline{S} \\ \|u\|_{\cH}=1 }}\mathrm{dist}(u,\mathrm{ran}(\overline{S}-\ii\varepsilon\mathbbm{1}))\;\leqslant\;\varepsilon\,\|\overline{S}^{-1}\|_{\mathrm{op}}\,.
  \]
  This shows finally
  \[
    \widehat{\delta}(\mathrm{ran}(\overline{S}-\ii\varepsilon\mathbbm{1}),\mathrm{ran}\overline{S}))\;\leqslant\;\varepsilon\,\|\overline{S}^{-1}\|_{\mathrm{op}}\,,
  \]
 so the bound \eqref{deltahatlimit} is proved (in the `$-$' case). 
 \end{proof}

 Passing now to the proof of Theorem \ref{thm:mapsconvergence}(ii) and (iii), decompose for every $\varepsilon>0$ a generic $g\in\mathcal{D}(S^*)$ as 
 \begin{equation}\label{eq:gdecSstar}
  f+S_{\mathrm{D}}^{-1}u_1+u_0\;=\;g\;=\;f_\varepsilon+u_\varepsilon-v_\varepsilon
 \end{equation}
 according to \eqref{eq:vNformula} and \eqref{eq:VBdec}, for unique vectors $f,f_\varepsilon\in\mathcal{D}(\overline{S})$, $u_0,u_1\in\ker S^*$, $u_\varepsilon\in\ker(S^*-\ii\varepsilon\mathbbm{1})$, and $v_\varepsilon\in\ker(S^*+\ii\varepsilon\mathbbm{1})$, and observe that 
 \begin{equation}\label{eq:g0g1action}
  \Gamma_0 g\;=\;u_0\,, \qquad \Gamma_1 g\;=\; u_1
 \end{equation}
 and 
 \begin{equation}\label{eq:gammaepsespl}
  \Gamma_{1,\varepsilon}^-g\;=\;2\,\ii\,\varepsilon\, u_\varepsilon\,,\qquad \Gamma_{1,\varepsilon}^+g\;=\;2\,\ii\,\varepsilon\,v_\varepsilon\,.
 \end{equation}
 As a consequence,
 \begin{equation}\label{eq:uepsminusveps}
  \Upsilon_\varepsilon g \;=\; u_\varepsilon-v_\varepsilon
 \end{equation}
  and 
 \begin{equation}\label{eq:Supsilon}
  S^*\Upsilon_\varepsilon\;=\;\frac{1}{2}(\Gamma_{1,\varepsilon}^-+\Gamma_{1,\varepsilon}^+) \qquad\textrm{on $\mathcal{D}(S^*)$}\,.
 \end{equation}
 Indeed, from \eqref{eq:gdecSstar}-\eqref{eq:gammaepsespl} one has 
  \[
   S^*\Upsilon_\varepsilon g\;=\;S^*(u_\varepsilon-v_\varepsilon)\;=\;\ii\,\varepsilon\,u_\varepsilon+\ii\,\varepsilon\,v_\varepsilon\;=\;\frac{1}{2}\,\Gamma_{1,\varepsilon}^- g+\frac{1}{2}\,\Gamma_{1,\varepsilon}^+ g\,.
  \]
  
  \begin{proof}[Proof of Theorem \ref{thm:mapsconvergence}, parts (ii) and (iii)]
   By part (i) 
   %$P_{\ker(S^*\pm\ii\varepsilon)}\xrightarrow{\;\varepsilon\downarrow 0\;}P_{\ker S^*}$ in operator norm, 
   and by the obvious identity $\|(S^*\pm\ii\varepsilon\mathbbm{1})g - S^*g\|_{\cH}=\varepsilon\|g\|_{\cH}$, 
   \[
    \big\|P_{\ker(S^*\mp\ii\varepsilon)}(S^*\pm\ii\varepsilon\mathbbm{1})g-P_{\ker S^*} S^*g\big\|_{\cH}\;\leqslant\;\varepsilon\big(\|g\|_{\cH}+\|\overline{S}^{-1}\|_{\mathrm{op}}\,\|S^*g\|_{\cH}\big)\,.
   \]
   In view of \eqref{eq:defmaps}-\eqref{eq:defmapseps} this establishes part (ii). 
   %Then part (ii) and \eqref{eq:Supsilon} imply $S^*\Upsilon_\varepsilon\xrightarrow{\;\varepsilon\downarrow 0\;}\Gamma_1$ strongly in operator sense, whence also $S_{\mathrm{D}}^{-1}S^*\Upsilon_\varepsilon\xrightarrow{\;\varepsilon\downarrow 0\;}S_{\mathrm{D}}^{-1}\Gamma_1$ strongly. Now,
   Next, one has
   \[\tag{a}\label{eq:afterSstar}
    \overline{S}f+u_1\;=\;\overline{S}f_\varepsilon+\ii\,\varepsilon\, u_\varepsilon+\ii\,\varepsilon\, v_\varepsilon\;=\;\overline{S}f_\varepsilon+S^*\Upsilon_\varepsilon g\,,
   \]
  the first identity above following from \eqref{eq:gdecSstar}, and the second from \eqref{eq:gammaepsespl}-\eqref{eq:Supsilon}. Since part (ii) and \eqref{eq:Supsilon} imply
  \[\tag{b}\label{eq:Sstarupsilon}
   \|S^*\Upsilon_\varepsilon g - u_1 \|_{\cH}\;=\;\|S^*\Upsilon_\varepsilon g - \Gamma_1 g \|_{\cH}\;\leqslant\;\varepsilon\,\big(\|g\|_{\cH}+\|\overline{S}^{-1}\|_{\mathrm{op}}\,\|S^*g\|_{\cH}\big)\,,
  \]
  one then deduces
  \[\tag{c}\label{eq:SDm1Sstar}
   \|S_\mathrm{D}^{-1}S^*\Upsilon_\varepsilon g- S_\mathrm{D}^{-1} u_1\|_{\cH}\;\leqslant\;\varepsilon\,\|S_\mathrm{D}^{-1}\|_{\mathrm{op}}\big(\|g\|_{\cH}+\|\overline{S}^{-1}\|_{\mathrm{op}}\,\|S^*g\|_{\cH}\big)
  \]
  from \eqref{eq:Sstarupsilon},
  \[\tag{d}\label{eq:buh}
   \|\overline{S}f_\varepsilon-\overline{S}f\|_{\cH}\;\leqslant\;\varepsilon\,\big(\|g\|_{\cH}+\|\overline{S}^{-1}\|_{\mathrm{op}}\,\|S^*g\|_{\cH}\big)
  \]
  from \eqref{eq:afterSstar} and \eqref{eq:Sstarupsilon}, and 
   \[\tag{e}\label{eq:fepsconftof}
     \|f_\varepsilon - f\|_{\cH}\;\leqslant\;\varepsilon\,\|\overline{S}^{-1}\|_{\mathrm{op}}\big(\|g\|_{\cH}+\|\overline{S}^{-1}\|_{\mathrm{op}}\,\|S^*g\|_{\cH}\big)
  \]
  from \eqref{eq:buh} and the bounded invertibility of $\overline{S}$. 
  In turn, \eqref{eq:fepsconftof} above, \eqref{eq:gdecSstar}, and \eqref{eq:uepsminusveps} imply 
   \[\tag{f}\label{eq:upsepsgto}
    \begin{split}
     \|\Upsilon_\varepsilon g-(S_{\mathrm{D}}^{-1}u_1+u_0)\|_{\cH}\;& =\;\|f_\varepsilon - f\|_{\cH} \\
     &\leqslant\;\varepsilon\,\|\overline{S}^{-1}\|_{\mathrm{op}}\big(\|g\|_{\cH}+\|\overline{S}^{-1}\|_{\mathrm{op}}\,\|S^*g\|_{\cH}\big)\,.
    \end{split}
   \]
   By re-writing  $\Gamma_{0,\varepsilon}g=\Upsilon_\varepsilon g-S_\mathrm{D}^{-1}S^*\Upsilon_\varepsilon g$ from \eqref{eq:defmaps}-\eqref{eq:defmapseps}, and $\Gamma_0 g=u_0=(S_{\mathrm{D}}^{-1}u_1+u_0)-S_{\mathrm{D}}^{-1}u_1$, \eqref{eq:SDm1Sstar} and \eqref{eq:upsepsgto} are combined so as to yield
  \[
   \begin{split}
    \|\Gamma_{0,\varepsilon}g-\Gamma_0 g\|_{\cH}\;&=\;\big\|\big(\Upsilon_\varepsilon g-(S_{\mathrm{D}}^{-1}u_1+u_0)\big)-\big(S_\mathrm{D}^{-1}S^*\Upsilon_\varepsilon g- S_\mathrm{D}^{-1} u_1\big)\big\|_{\cH} \\
    &\leqslant\;\varepsilon\,\big(\|\overline{S}^{-1}\|_{\mathrm{op}}+\|S_\mathrm{D}^{-1}\|_{\mathrm{op}}\big)\big(\|g\|_{\cH}+\|\overline{S}^{-1}\|_{\mathrm{op}}\,\|S^*g\|_{\cH}\big)
   \end{split}
% 
%    \|\Gamma_{0,\varepsilon}g-\Gamma_0 g\|_{\cH}\;\leqslant\;
  \] 
  and also part (iii) is proved.
  \end{proof}

  The statement and the proof of Theorem \ref{thm:mapsconvergence} imply the following.	
  
  \begin{corollary}\label{cor:limitsSstar}
   Under the assumptions \eqref{eq:assumptions} write for any $\varepsilon>0$ an arbitrary $g\in\mathcal{D}(S^*)$ as in the decomposition \eqref{eq:gdecSstar}. Then, as $\varepsilon\downarrow 0$,
   \begin{eqnarray}
    (\mathbbm{1}-S_{\mathrm{D}}^{-1}S^*)(u_\varepsilon-v_\varepsilon)\!&=&\! u_0+O(\varepsilon)\,, \\
 \Gamma_{1,\varepsilon}^- g\;=\;2\,\ii\,\varepsilon\, u_\varepsilon \!&=&\! u_1+O(\varepsilon)\,, \\
 \Gamma_{1,\varepsilon}^+ g\;=\;2\,\ii\,\varepsilon\, v_\varepsilon \!&=&\! u_1+O(\varepsilon)\,, \\
     \Upsilon_\varepsilon g\;=\;\frac{\Gamma_{1,\varepsilon}^- - \Gamma_{1,\varepsilon}^+}{2\,\ii\,\varepsilon}\,g	 \;=\;u_\varepsilon-v_\varepsilon\!&=&\!S_{\mathrm{D}}^{-1}u_1+u_0+O(\varepsilon)\,, \\
         S^*\Upsilon_\varepsilon g\;=\;S^*(u_\varepsilon-v_\varepsilon)\!&=&\! u_1+O(\varepsilon)\,, \\
    f_\varepsilon \!&=&\! f+O(\varepsilon)\,, \\
    \overline{S}f_\varepsilon \!&= &\! \overline{S}f+O(\varepsilon)
   \end{eqnarray}
  in the norm topology of $\cH$.  
  \end{corollary}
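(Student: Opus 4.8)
The plan is to observe that every one of the seven asymptotic identities in the corollary is nothing but a restatement, in big-$O$ notation, of a quantitative bound already established in Theorem~\ref{thm:mapsconvergence} and its proof. Thus the whole corollary reduces to matching each line to the correct estimate and reading off that its right-hand side is $O(\varepsilon)$, with an implicit constant depending only on the fixed quantities $\|g\|_{\cH}$, $\|S^*g\|_{\cH}$, $\|\overline{S}^{-1}\|_{\mathrm{op}}$, and $\|S_\mathrm{D}^{-1}\|_{\mathrm{op}}$, none of which involves $\varepsilon$.

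First I would recall the explicit action of the boundary maps on the decomposition \eqref{eq:gdecSstar}, as recorded in \eqref{eq:g0g1action}, \eqref{eq:gammaepsespl}, and \eqref{eq:uepsminusveps}: namely $\Gamma_0 g = u_0$, $\Gamma_1 g = u_1$, $\Gamma_{1,\varepsilon}^- g = 2\,\ii\,\varepsilon\, u_\varepsilon$, $\Gamma_{1,\varepsilon}^+ g = 2\,\ii\,\varepsilon\, v_\varepsilon$, and $\Upsilon_\varepsilon g = u_\varepsilon - v_\varepsilon$. With these identifications in hand, the second and third lines of the corollary follow directly from part (ii) of the theorem, since $2\,\ii\,\varepsilon\, u_\varepsilon = \Gamma_{1,\varepsilon}^- g$ and $2\,\ii\,\varepsilon\, v_\varepsilon = \Gamma_{1,\varepsilon}^+ g$ both converge to $\Gamma_1 g = u_1$; the fifth line follows from the bound \eqref{eq:Sstarupsilon} together with $S^*\Upsilon_\varepsilon g = S^*(u_\varepsilon - v_\varepsilon)$; the sixth line is just \eqref{eq:fepsconftof}; and the seventh line is just \eqref{eq:buh}.

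For the fourth line I would invoke \eqref{eq:upsepsgto}, which bounds $\|\Upsilon_\varepsilon g - (S_\mathrm{D}^{-1} u_1 + u_0)\|_{\cH}$ by a constant multiple of $\varepsilon$, recalling once more that $\Upsilon_\varepsilon g = u_\varepsilon - v_\varepsilon$. For the first line I would rewrite $\Gamma_{0,\varepsilon} g = \Gamma_0\,\Upsilon_\varepsilon g = (\mathbbm{1} - S_\mathrm{D}^{-1} S^*)(u_\varepsilon - v_\varepsilon)$, combining \eqref{eq:defmaps}, \eqref{eq:defmapseps}, and \eqref{eq:uepsminusveps}, so that part (iii) of the theorem — which bounds $\|\Gamma_{0,\varepsilon} g - \Gamma_0 g\|_{\cH}$ and recalls $\Gamma_0 g = u_0$ — yields precisely the claimed $u_0 + O(\varepsilon)$. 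I expect no substantial obstacle: the genuine analytic work has already been carried out in proving Theorem~\ref{thm:mapsconvergence}, and what remains is purely bookkeeping. The only point requiring a little care is the algebraic identification $\Gamma_{0,\varepsilon} g = (\mathbbm{1} - S_\mathrm{D}^{-1} S^*)(u_\varepsilon - v_\varepsilon)$ needed for the first line, together with a consistent reading of each right-hand side as $O(\varepsilon)$ at fixed $g$, the implicit constants depending on $g$ only through $\|g\|_{\cH}$ and $\|S^*g\|_{\cH}$.
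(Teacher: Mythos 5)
Your proposal is correct and matches the paper exactly: the paper offers no separate argument, stating only that the corollary follows from the statement and proof of Theorem \ref{thm:mapsconvergence}, and your line-by-line matching via \eqref{eq:g0g1action}--\eqref{eq:uepsminusveps} and the estimates \eqref{eq:Sstarupsilon}, \eqref{eq:buh}, \eqref{eq:fepsconftof}, \eqref{eq:upsepsgto} is precisely the intended bookkeeping. The identification $\Gamma_{0,\varepsilon}g=(\mathbbm{1}-S_\mathrm{D}^{-1}S^*)(u_\varepsilon-v_\varepsilon)$, which you correctly flag as the one point needing care, is indeed how the first line is obtained from part (iii).
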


  It is worth remarking the following behaviours. For each fixed $g\in\mathcal{D}(S^*)$ the `regular' component $f_\varepsilon$ relative to (the von Neumann decomposition at) the spectral point $\ii\varepsilon$ (see \eqref{eq:gdecSstar} above) approximates the `regular' component $f$ in the graph norm of $\overline{S}$, i.e.,
  \begin{equation}
   \|f-f_\varepsilon\|_{\Gamma(S^*)}\;=\;\big(\|f-f_\varepsilon\|_{\cH}^2+\|\overline{S}(f-f_\varepsilon)\|_{\cH}^2\big)^{\frac{1}{2}}\;\stackrel{\varepsilon\downarrow 0}{=}\;O(\varepsilon)\,.
  \end{equation}
%$\|f-f_\varepsilon\|_{\cH}+\|\overline{S}(f-f_\varepsilon)\|_{\cH}\xrightarrow{\,\varepsilon\downarrow0\,}0$. 
The `singular' components $u_\varepsilon$ and $v_\varepsilon$ of $g$ relative to $\ker(S^*\mp\ii\varepsilon\mathbbm{1})$ separately have divergent norms $\| u_\varepsilon\|_{\cH}\sim\varepsilon^{-1}$ and $\|v_\varepsilon\|_{\cH}\sim\varepsilon^{-1}$, a divergence that cancels out in their difference $u_\varepsilon-v_\varepsilon$: it too has $S^*$-graph-norm limit, explicitly, 
  \begin{equation}
   \big\|(u_\varepsilon-v_\varepsilon)-(S_{\mathrm{D}}^{-1}u_1+u_0)\big\|_{\Gamma(S^*)}\;\stackrel{\varepsilon\downarrow 0}{=}\;O(\varepsilon)\,.
  \end{equation}

 \section{Convergence of extension parametrisations.}\label{sec:convparam}

 The analysis of the previous Section can be summarised by saying that under the assumptions \eqref{eq:assumptions} and in the precise sense of Theorem \ref{thm:mapsconvergence} the decompositions of $\mathcal{D}(S^*)$ respectively in terms of the deficiency spaces $\ker(S^*\mp\ii\varepsilon\mathbbm{1})$ and $\ker S^*$ gives rise, as $\varepsilon\downarrow 0$, to the convergence 
 \begin{equation}
  \big(\ker(S^*+\ii\varepsilon\mathbbm{1})\,,\,\Gamma_{0,\varepsilon} \,,\,\Gamma_{1,\varepsilon}^-\big)\;\xrightarrow{\,\varepsilon\downarrow0\,}\; (\ker S^*,\Gamma_0,\Gamma_1)
 \end{equation}
 of the imaginary pre-triplet associated to the canonical triplet \eqref{eq:thetriplet}. This \emph{structural} property implies an analogous convergence, in a sense now to be made precise, of the (von Neumann) self-adjoint extension parametrisation \eqref{eq:vNparam} to the (Kre{\u\i}n-Vi\v{s}ik-Birman-Grubb) self-adjoint extension parametrisation \eqref{eq:ST-2}.

 As anticipated in the introduction, the question of conceptual relevance we are interested in concerns the link, for an arbitrary fixed self-adjoint extension $\widetilde{S}$ of $S$, between the operator label $U_\varepsilon$, $\varepsilon>0$, the unitary $U_\varepsilon:\ker(S^*-\ii\varepsilon\mathbbm{1})\xrightarrow{\cong}\ker(S^*+\ii\varepsilon\mathbbm{1})$ uniquely associated to $\widetilde{S}\equiv S_{U_\varepsilon}$ via
  \begin{equation}\label{eq:vNparameps}
\begin{split}
\mathcal{D}(S_{U_\varepsilon})\;&:=\;\mathcal{D}(\overline{S})\,\dotplus\,(\mathbbm{1}-U_\varepsilon)\ker(S^*-\ii\varepsilon\mathbbm{1})\,, \\
S_{U_\varepsilon}(f_\varepsilon+u_\varepsilon-U_\varepsilon u_\varepsilon)\;&:=\;\overline{S} f_\varepsilon +\ii\varepsilon u_\varepsilon+\ii\varepsilon\,U_\varepsilon u_\varepsilon\,,
\end{split}
\end{equation}
 and the operator label $T$, the self-adjoint operator associated to $\widetilde{S}\equiv S_T$ via 
  \begin{equation}\label{eq:ST-2again}
\begin{split}
&\mathcal{D}(S_T)\;:=\;\left\{f+S_\mathrm{D}^{-1}(Tu+w)+u\left|
\begin{array}{c}
f\in\mathcal{D}(\overline{S})\,,\;u\in\mathcal{D}(T) \\
w\in\ker S^*\cap\mathcal{D}(T)^\perp
\end{array}
\right.\right\}, \\
&  S_T(f+S_\mathrm{D}^{-1}(Tu+w)+u)\;:=\;\overline{S}f+Tu+w\,.
\end{split}
\end{equation}

For concreteness one may ask: from the sole knowledge of $U_\varepsilon\equiv U_\varepsilon^{(\widetilde{S})}$, namely its action on $\ker(S^*-\ii\varepsilon\mathbbm{1})$, how to reconstruct the knowledge (domain and action) of $T\equiv T^{(\widetilde{S})}$ asymptotically as $\varepsilon\downarrow 0$ ?

And conversely: given the information on $T\equiv T^{(\widetilde{S})}$, how to reconstruct (an approximate information of) $U_\varepsilon\equiv U_\varepsilon^{(\widetilde{S})}$ for sufficiently small $\varepsilon>0$ ?

 The answers are given here, based on the findings of the previous Section. To begin with, a further corollary to Theorem \ref{thm:mapsconvergence}, immediately derivable from \eqref{eq:g0g1action}-\eqref{eq:gammaepsespl} and from Corollary \ref{cor:limitsSstar} and which we now give separate status of a theorem, is the following.

\begin{theorem}
 Assume \eqref{eq:assumptions} and let $\widetilde{S}$ be a self-adjoint extension of $S$. For arbitrary $\varepsilon>0$ let $U_\varepsilon\equiv U_\varepsilon^{(\widetilde{S})}$ and $T\equiv T^{(\widetilde{S})}$ be the extension parameters of $\widetilde{S}$ given, respectively, by \eqref{eq:vNparameps} and \eqref{eq:ST-2again}. Decompose an arbitrary $g\in\mathcal{D}(\widetilde{S})$ accordingly, i.e.,
 \begin{equation}\label{eq:gdecomposedaccordingly}
  f^{(g)}+S_{\mathrm{D}}^{-1}(Tu^{(g)}+w^{(g)})+u^{(g)}\;=\;g\;=\;f_\varepsilon^{(g)}+u_\varepsilon^{(g)}- U_\varepsilon u_\varepsilon^{(g)}
 \end{equation}
 for $f^{(g)},f_\varepsilon^{(g)}\in\mathcal{D}(\overline{S})$, $u^{(g)}\in\mathcal{D}(T)$, $w^{(g)}\in\ker S^*\cap\mathcal{D}(T)^\perp$, and $u_\varepsilon^{(g)}\in\ker(S^*-\ii\varepsilon\mathbbm{1})$, all uniquely determined by $g$. Then:
\begin{eqnarray}
 u_\varepsilon^{(g)} \!&=&\! (2\,\ii\,\varepsilon)^{-1} P_{\ker(S^*-\ii\varepsilon\mathbbm{1})}(\widetilde{S}+\ii\varepsilon\mathbbm{1})g\,, \label{eq:uvareps} \\
 U_\varepsilon u_\varepsilon^{(g)} \!&=&\! (2\,\ii\,\varepsilon)^{-1} P_{\ker(S^*+\ii\varepsilon\mathbbm{1})}(\widetilde{S}-\ii\varepsilon\mathbbm{1})g\,, \label{eq:Uuvareps} \\
 u^{(g)}\!&=&\!(\mathbbm{1}-S_{\mathrm{D}}^{-1}\widetilde{S})g\,, \label{eq:ug} \\
 f^{(g)}\!&=&\! S_{\mathrm{D}}^{-1}(\mathbbm{1}-P_{\ker S^*})\widetilde{S} g\,, \\
 Tu^{(g)}+w^{(g)}\!&=&\! P_{\ker S^*}\widetilde{S} g\,, \label{eq:Tugpwg}
\end{eqnarray}
  and moreover, as $\varepsilon\downarrow 0$,
 \begin{eqnarray}
 (\mathbbm{1}-S_{\mathrm{D}}^{-1}S^*)(u_\varepsilon-U_\varepsilon u_\varepsilon)\!&=&\! u^{(g)}+O(\varepsilon)\,, \label{eq:limtoug} \\
   %\lim_{\varepsilon\downarrow 0}\,2\,\ii\,\varepsilon\,  \Gamma_{1,\varepsilon}^- g\;=\;\lim_{\varepsilon\downarrow 0} \,2\,\ii\,\varepsilon\, u_\varepsilon \!&=&\! u_1\,, \\
  %2\,\ii\,\varepsilon\,u_\varepsilon^{(g)}  \!&=&\! T u^{(g)} + w^{(g)}+O(\varepsilon)\;=\; 2\,\ii\,\varepsilon\,U_\varepsilon u_\varepsilon^{(g)}\,, \\
  \left.
  \begin{array}{r}
   2\,\ii\,\varepsilon\,u_\varepsilon^{(g)} \! \\
   2\,\ii\,\varepsilon\,U_\varepsilon u_\varepsilon^{(g)} \!
  \end{array}\right\}
  \!&=&\! T u^{(g)} + w^{(g)}+O(\varepsilon)\,, \\
   u_\varepsilon^{(g)} -U_\varepsilon u_\varepsilon^{(g)}\!&=&\! S_{\mathrm{D}}^{-1}(Tu^{(g)}+w^{(g)})+u^{(g)}+O(\varepsilon)\,, \label{eq:limuepsmUueps}\\
    \ii\,\varepsilon\, u_\varepsilon^{(g)} +  \ii\,\varepsilon\, U_\varepsilon u_\varepsilon^{(g)}\!&=&\! T u^{(g)} + w^{(g)}+O(\varepsilon)\,, \label{eq:iepsuepsplusveps} \\
  f_\varepsilon^{(g)} \!&=&\! f^{(g)} +O(\varepsilon)\,, \label{eq:fepstofingen}\\
  \overline{S} f_\varepsilon^{(g)} \!&=&\! \overline{S} f^{(g)}+O(\varepsilon) \label{eq:Sstarfepstofingen}
 \end{eqnarray}
  in the Hilbert norm of $\cH$. \eqref{eq:limuepsmUueps}-\eqref{eq:iepsuepsplusveps} and \eqref{eq:fepstofingen}-\eqref{eq:Sstarfepstofingen} can be combined, respectively, as
   \begin{eqnarray}
    u_\varepsilon^{(g)} -U_\varepsilon u_\varepsilon^{(g)}\!&=&\! S_{\mathrm{D}}^{-1}(Tu^{(g)}+w^{(g)})+u^{(g)}+O(\varepsilon)\,, \label{eq:uepsUepsgraphn} \\
    f_\varepsilon^{(g)} \!&=&\! f^{(g)} +O(\varepsilon) \label{eq:fepsfgraphn} 
   \end{eqnarray}
  in the graph norm of $\widetilde{S}$ (hence also in the graph norm of $S^*$).
\end{theorem}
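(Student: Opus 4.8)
The plan is to read off the entire statement from Section~\ref{sec:impretriplet} by exploiting a single structural fact: any $g\in\mathcal{D}(\widetilde{S})$ is in particular an element of $\mathcal{D}(S^*)$, and since $\overline{S}\subset\widetilde{S}=\widetilde{S}^*\subset S^*$ one has $\widetilde{S}g=S^*g$ there, so every occurrence of $S^*$ in the boundary maps \eqref{eq:defmaps}--\eqref{eq:defmapseps} may be replaced by $\widetilde{S}$. First I would match the two decompositions displayed in \eqref{eq:gdecomposedaccordingly} against the generic decomposition \eqref{eq:gdecSstar} used throughout the previous section. On the Kre{\u\i}n-Vi\v{s}ik-Birman side, the uniqueness of \eqref{eq:VBdec} identifies the abstract components as $u_0=u^{(g)}$ and $u_1=Tu^{(g)}+w^{(g)}$; on the von Neumann side, since $g\in\mathcal{D}(S_{U_\varepsilon})$ and the domain in \eqref{eq:vNparameps} consists precisely of vectors $f_\varepsilon+(\mathbbm{1}-U_\varepsilon)u_\varepsilon$, the uniqueness of \eqref{eq:vNformula} forces the generic $\ker(S^*+\ii\varepsilon\mathbbm{1})$-component $v_\varepsilon$ to equal $U_\varepsilon u_\varepsilon^{(g)}$. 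This componentwise matching is the only genuinely non-routine input.

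With these identifications, the explicit formulas \eqref{eq:uvareps}--\eqref{eq:Tugpwg} are immediate. Indeed, \eqref{eq:gammaepsespl} gives $\Gamma_{1,\varepsilon}^-g=2\,\ii\,\varepsilon\,u_\varepsilon^{(g)}$ and $\Gamma_{1,\varepsilon}^+g=2\,\ii\,\varepsilon\,U_\varepsilon u_\varepsilon^{(g)}$; inserting the definitions \eqref{eq:defmapseps} of $\Gamma_{1,\varepsilon}^\pm$ and replacing $S^*$ by $\widetilde{S}$ yields \eqref{eq:uvareps} and \eqref{eq:Uuvareps}. Likewise \eqref{eq:g0g1action} together with $\Gamma_0=\mathbbm{1}-S_\mathrm{D}^{-1}S^*$ gives \eqref{eq:ug}. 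For the remaining two, I would apply $\widetilde{S}=S^*$ to \eqref{eq:gdecomposedaccordingly} to obtain $\widetilde{S}g=\overline{S}f^{(g)}+(Tu^{(g)}+w^{(g)})$, using $S^*S_\mathrm{D}^{-1}=\mathbbm{1}$ and $S^*u^{(g)}=0$; projecting with $P_{\ker S^*}$ gives \eqref{eq:Tugpwg}, while projecting with $\mathbbm{1}-P_{\ker S^*}$ and using that $S_\mathrm{D}^{-1}$ inverts $\overline{S}$ on $\mathrm{ran}\,\overline{S}=(\ker S^*)^\perp$ gives the formula for $f^{(g)}$.

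For the asymptotics \eqref{eq:limtoug}--\eqref{eq:Sstarfepstofingen} I would simply transcribe Corollary~\ref{cor:limitsSstar} under the same dictionary $u_0=u^{(g)}$, $u_1=Tu^{(g)}+w^{(g)}$, $v_\varepsilon=U_\varepsilon u_\varepsilon^{(g)}$: the seven displayed limits there pass line by line to those here, recalling in particular that $S^*\Upsilon_\varepsilon g=\ii\,\varepsilon\,u_\varepsilon+\ii\,\varepsilon\,v_\varepsilon$, which produces \eqref{eq:iepsuepsplusveps}. Finally, the graph-norm upgrades \eqref{eq:uepsUepsgraphn}--\eqref{eq:fepsfgraphn} follow by pairing an $\cH$-norm bound on a vector with an $\cH$-norm bound on its image under $\widetilde{S}=S^*$: for \eqref{eq:uepsUepsgraphn} one combines \eqref{eq:limuepsmUueps}, which controls the vector $(u_\varepsilon^{(g)}-U_\varepsilon u_\varepsilon^{(g)})-(S_\mathrm{D}^{-1}(Tu^{(g)}+w^{(g)})+u^{(g)})$, with \eqref{eq:iepsuepsplusveps}, which controls its $S^*$-image (since $S^*(S_\mathrm{D}^{-1}u_1+u_0)=u_1$), and for \eqref{eq:fepsfgraphn} one combines \eqref{eq:fepstofingen} with \eqref{eq:Sstarfepstofingen}; each pair yields an $O(\varepsilon)$ bound on the full graph norm.

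I expect no substantial obstacle: once the domain matching $v_\varepsilon=U_\varepsilon u_\varepsilon^{(g)}$ and the restriction identity $\widetilde{S}=S^*\!\upharpoonright\!\mathcal{D}(\widetilde{S})$ are secured, the whole statement is a translation of Section~\ref{sec:impretriplet}. The only point deserving care, rather than computation, is ensuring that the uniqueness of both direct-sum decompositions in \eqref{eq:gdecomposedaccordingly} legitimately forces the componentwise identifications above, which is exactly what the direct-sum nature of \eqref{eq:VBdec} and \eqref{eq:vNformula} guarantees.
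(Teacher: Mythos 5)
Your proposal is correct and follows essentially the same route as the paper, which presents this theorem as immediately derivable from \eqref{eq:g0g1action}--\eqref{eq:gammaepsespl} and Corollary \ref{cor:limitsSstar}: one matches the two decompositions in \eqref{eq:gdecomposedaccordingly} against the generic decomposition \eqref{eq:gdecSstar} via uniqueness of the direct sums (forcing $u_0=u^{(g)}$, $u_1=Tu^{(g)}+w^{(g)}$, $v_\varepsilon=U_\varepsilon u_\varepsilon^{(g)}$), replaces $S^*$ by $\widetilde{S}$ on $\mathcal{D}(\widetilde{S})$, and transcribes the limits, with the graph-norm upgrades obtained exactly as you describe by pairing each $\cH$-bound with the bound on the $S^*$-image. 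Your componentwise identifications and the observation that both $u_\varepsilon^{(g)}-U_\varepsilon u_\varepsilon^{(g)}$ and $S_\mathrm{D}^{-1}(Tu^{(g)}+w^{(g)})+u^{(g)}$ lie in $\mathcal{D}(\widetilde{S})$ are precisely the points needed, so there is no gap.
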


 The combination of \eqref{eq:uvareps} and \eqref{eq:Uuvareps} yields an explicit characterisation of the operator label $U_\varepsilon$ in terms of $\widetilde{S}$ and $P_{\ker(S^*\pm\ii\varepsilon\mathbbm{1})}$, beside the already known unique association $\widetilde{S}\mapsto U_\varepsilon^{(\widetilde{S})}$. Let us formulate this for a generic spectral point $z\in\mathbb{C}^+$, instead of just $z=\ii\varepsilon$.

 \begin{corollary}
  Assume \eqref{eq:assumptions} and let $\widetilde{S}$ be a self-adjoint extension of $S$. For every $z\in\mathbb{C}^+$ and every $g\in\mathcal{D}(\widetilde{S})$ the extension parameter $U\equiv U^{(\widetilde{S})}:\ker(S^*-z\mathbbm{1})\to\ker(S^*-\overline{z}\mathbbm{1})$ associated to $\widetilde{S}$ in the von Neumann parametrisation  \eqref{eq:vNparam} acts as
  \begin{equation}\label{eq:Uepsreconstr}
   U_\varepsilon^{(\widetilde{S})} P_{\ker(S^*-z\mathbbm{1})}(\widetilde{S}-\overline{z}\mathbbm{1})g\;=\;P_{\ker(S^*-\overline{z}\mathbbm{1})}(\widetilde{S}-z\mathbbm{1})g\,.
  \end{equation}
 \end{corollary}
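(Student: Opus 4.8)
The plan is to exploit directly the von Neumann action formula \eqref{eq:vNparam}, generalised from $z=\ii\varepsilon$ to an arbitrary $z\in\mathbb{C}^+$, reading off the components $u$ and $Uu$ of $g$ by means of two orthogonal projections. First I would fix $z\in\mathbb{C}^+$, write $\widetilde{S}=S_U$ in the von Neumann scheme, and decompose the generic $g\in\mathcal{D}(\widetilde{S})$ as $g=f+u-Uu$ with $f\in\mathcal{D}(\overline{S})$, $u\in\ker(S^*-z\mathbbm{1})$, and $Uu\in\ker(S^*-\overline{z}\mathbbm{1})$, uniquely determined by $g$, so that $\widetilde{S}g=\overline{S}f+zu-\overline{z}\,Uu$.

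Next I would compute $(\widetilde{S}-\overline{z}\mathbbm{1})g$: the terms containing $Uu$ cancel and the $u$-terms telescope, leaving
\[
 (\widetilde{S}-\overline{z}\mathbbm{1})g\;=\;(\overline{S}-\overline{z}\mathbbm{1})f+(z-\overline{z})u\,.
\]
I would then invoke the orthogonal Hilbert-space decomposition $\cH=\mathrm{ran}(\overline{S}-\overline{z}\mathbbm{1})\oplus\ker(S^*-z\mathbbm{1})$, valid because $(\overline{S}-\overline{z}\mathbbm{1})^*=S^*-z\mathbbm{1}$ and, $\overline{S}$ being closed and symmetric, $\mathrm{ran}(\overline{S}-\overline{z}\mathbbm{1})$ is closed at the non-real point $\overline{z}$ (exactly as in the proof of Theorem \ref{thm:mapsconvergence}(i)). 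Applying $P_{\ker(S^*-z\mathbbm{1})}$ annihilates the range component and fixes the deficiency component, whence $(z-\overline{z})u=P_{\ker(S^*-z\mathbbm{1})}(\widetilde{S}-\overline{z}\mathbbm{1})g$. By the symmetric computation I would obtain $(\widetilde{S}-z\mathbbm{1})g=(\overline{S}-z\mathbbm{1})f+(z-\overline{z})Uu$ and, projecting with $P_{\ker(S^*-\overline{z}\mathbbm{1})}$ along $\cH=\mathrm{ran}(\overline{S}-z\mathbbm{1})\oplus\ker(S^*-\overline{z}\mathbbm{1})$, the identity $(z-\overline{z})Uu=P_{\ker(S^*-\overline{z}\mathbbm{1})}(\widetilde{S}-z\mathbbm{1})g$. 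These are precisely the $z$-generalisations of \eqref{eq:uvareps} and \eqref{eq:Uuvareps}.

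Finally, I would apply $U$ to the first of these two identities and equate the outcome with the second; since $z-\overline{z}=2\,\ii\,\im z\neq 0$ for $z\in\mathbb{C}^+$, the common nonzero scalar cancels and delivers precisely \eqref{eq:Uepsreconstr}. I do not expect any genuine obstacle here: the core computation is telescoping algebra driven by the explicit action of $S_U$, and the only point requiring care is the justification that the two decompositions are \emph{orthogonal} — that is, the closedness of $\mathrm{ran}(\overline{S}-z\mathbbm{1})$ and $\mathrm{ran}(\overline{S}-\overline{z}\mathbbm{1})$ at the non-real points $z,\overline{z}$ together with the adjoint identities $(\overline{S}-z\mathbbm{1})^*=S^*-\overline{z}\mathbbm{1}$ and $(\overline{S}-\overline{z}\mathbbm{1})^*=S^*-z\mathbbm{1}$ — which is already part of the standard von Neumann apparatus underlying \eqref{eq:vNformula}.
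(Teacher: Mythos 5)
Your proposal is correct and follows essentially the same route as the paper: the paper obtains \eqref{eq:Uepsreconstr} by combining the identities \eqref{eq:uvareps} and \eqref{eq:Uuvareps} (stated at $z=\ii\varepsilon$, with the general $z$ case analogous), which are exactly your projected telescoping identities $(z-\overline{z})u=P_{\ker(S^*-z\mathbbm{1})}(\widetilde{S}-\overline{z}\mathbbm{1})g$ and $(z-\overline{z})Uu=P_{\ker(S^*-\overline{z}\mathbbm{1})}(\widetilde{S}-z\mathbbm{1})g$, followed by applying $U$ and cancelling the nonzero scalar $z-\overline{z}$. Your only deviation is that you spell out the underlying computation for generic $z\in\mathbb{C}^+$ (and note, correctly though not indispensably, the orthogonality $\mathrm{ran}(\overline{S}-\overline{z}\mathbbm{1})\perp\ker(S^*-z\mathbbm{1})$, which in fact needs no closedness of the range since $\mathrm{ran}(A)\subseteq\ker(A^*)^\perp$ always holds), whereas the paper derives the $z=\ii\varepsilon$ case through the maps $\Gamma_{1,\varepsilon}^\pm$ of Section \ref{sec:impretriplet}.
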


  Formula \eqref{eq:Uepsreconstr} is conceptually relevant in that, as commented already, the $U$-parameter in the von Neumann extension classification is not referred to the canonical parameter of a distinguished extension, so one can only have expressions for the action of the $U$-parameter directly in terms of the corresponding $\widetilde{S}\equiv S_U$.

  On a related footing, the limit \eqref{eq:limuepsmUueps} provides an approximation (in the norm of $\cH$) of the action of $\mathbbm{1}-U_\varepsilon$ on a generic $u_\varepsilon^{(g)}\in\ker(S^*-\ii\varepsilon\mathbbm{1})$ in terms of the action of $T$ on the corresponding $u^{(g)}$ and in terms of the corresponding $w^{(g)}$.
%   :
%   \begin{equation}\label{eq:1mUepslim}
%    (\mathbbm{1}-U_\varepsilon)u_\varepsilon^{(g)}\;\approx\;S_{\mathrm{D}}^{-1}(Tu^{(g)}+w^{(g)})+u^{(g)}\qquad (\varepsilon\downarrow 0)\,.
%   \end{equation}
   In fact, the combination of \eqref{eq:ug} and \eqref{eq:Tugpwg} with the limit \eqref{eq:limuepsmUueps} yields
   \begin{equation}\label{eq:1mUepslim2}
    (\mathbbm{1}-U_\varepsilon)u_\varepsilon^{(g)}\;\stackrel{\varepsilon\downarrow 0}{=}  \;\big(\mathbbm{1}-S_{\mathrm{D}}^{-1}(\mathbbm{1}-P_{\ker S^*})\widetilde{S}\big)g+O(\varepsilon)\,.	
   \end{equation}
  Formulas \eqref{eq:limuepsmUueps}, \eqref{eq:Uepsreconstr}, or \eqref{eq:1mUepslim2} clearly provide an answer, implicit but possibly exploitable in applications, to the question concerning recovering the extension parameter $U_\varepsilon\equiv U_\varepsilon^{(\widetilde{S})}$ in terms of $\widetilde{S}$ or the extension parameter $T\equiv T^{(\widetilde{S})}$.

  Concerning the converse question, from $U_\varepsilon^{(\widetilde{S})}$ to $T^{(\widetilde{S})}$, one may say the following.
  
  \begin{theorem}\label{thm:recoverT}
    Assume \eqref{eq:assumptions} and let $\widetilde{S}$ be a self-adjoint extension of $S$. For arbitrary $\varepsilon>0$ let $U_\varepsilon\equiv U_\varepsilon^{(\widetilde{S})}$ be the extension parameter of $\widetilde{S}$ given, by \eqref{eq:vNparameps}, so that an arbitrary $g\in\mathcal{D}(\widetilde{S})$ is decomposed as 
    \begin{equation}\label{eq:gdecompeps}
  g\;=\;f_\varepsilon^{(g)}+u_\varepsilon^{(g)}- U_\varepsilon u_\varepsilon^{(g)}
 \end{equation}
 for $f_\varepsilon^{(g)}\in\mathcal{D}(\overline{S})$ and $u_\varepsilon^{(g)}\in\ker(S^*-\ii\varepsilon\mathbbm{1})$ uniquely determined by $g$. Then the domain and the matrix elements of the extension parameter $T\equiv T^{(\widetilde{S})}$ of $\widetilde{S}$ given by \eqref{eq:ST-2again} satisfy
 \begin{equation}\label{eq:TfromU}
  \begin{split}
   \mathcal{D}(T)\;&=\;\left\{ u^{(g)}\in\ker S^*\,\left|
   \begin{array}{c}
    u^{(g)}:=\displaystyle\lim_{\varepsilon\downarrow 0}(\mathbbm{1}-S_{\mathrm{D}}^{-1}\widetilde{S})(u_\varepsilon^{(g)}- U_\varepsilon u_\varepsilon^{(g)}) \\
    \textrm{ for some }g\in\mathcal{D}(\widetilde{S})
   \end{array}
   \!\right.\right\}, \\
   \langle u^{(g)},Tu^{(g)}\rangle_{\cH}\;&=\;\lim_{\varepsilon\downarrow 0} \;\ii\,\varepsilon\,\big\langle (\mathbbm{1}-S_{\mathrm{D}}^{-1}\widetilde{S})(u_\varepsilon^{(g)}- U_\varepsilon u_\varepsilon^{(g)})\,,\,(u_\varepsilon^{(g)}+ U_\varepsilon u_\varepsilon^{(g)})\big\rangle_{\cH}\,.
  \end{split}
 \end{equation}
%   Then each $g\in\mathcal{D}(\widetilde{S})$ also writes as
%    \begin{equation}
%   g\;=\;f^{(g)}+S_{\mathrm{D}}^{-1}(Tu^{(g)}+w^{(g)})+u^{(g)}
%  \end{equation}
%   with 
% 
%  
%  
%  
%   For arbitrary $\varepsilon>0$ let $U_\varepsilon\equiv U_\varepsilon^{(\widetilde{S})}$ and $T\equiv T^{(\widetilde{S})}$ be the extension parameters of $\widetilde{S}$ given, respectively, by \eqref{eq:vNparameps} and \eqref{eq:ST-2again}. Decompose an arbitrary $g\in\mathcal{D}(\widetilde{S})$ accordingly, i.e.,

%  for $f^{(g)},f_\varepsilon^{(g)}\in\mathcal{D}(\overline{S})$, $u^{(g)}\in\mathcal{D}(T)$, $w^{(g)}\in\ker S^*\cap\mathcal{D}(T)^\perp$, and $u_\varepsilon^{(g)}\in\ker(S^*-\ii\varepsilon\mathbbm{1})$, all uniquely determined by $g$. Then,
%  
%  
%  
%   
\end{theorem}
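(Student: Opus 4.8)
The plan is to read Theorem \ref{thm:recoverT} off directly from the limits already recorded in the preceding theorem, the only genuinely new ingredients being the identification of the range of $g\mapsto u^{(g)}$ with $\mathcal{D}(T)$ and an orthogonality argument that removes the $w^{(g)}$-contribution in the matrix-element formula. The first observation I would make is that, since $\widetilde{S}=S^*\!\upharpoonright\!\mathcal{D}(\widetilde{S})$ and since the von Neumann decomposition \eqref{eq:gdecompeps} guarantees $u_\varepsilon^{(g)}-U_\varepsilon u_\varepsilon^{(g)}\in\mathcal{D}(\widetilde{S})$, one may freely replace $\widetilde{S}$ by $S^*$ when acting on this vector. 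Consequently \eqref{eq:limtoug} reads
\[
 (\mathbbm{1}-S_{\mathrm{D}}^{-1}\widetilde{S})(u_\varepsilon^{(g)}-U_\varepsilon u_\varepsilon^{(g)})\;\xrightarrow{\,\varepsilon\downarrow0\,}\;u^{(g)}\,,
\]
so the limit defining $\mathcal{D}(T)$ in \eqref{eq:TfromU} exists for every $g\in\mathcal{D}(\widetilde{S})$ and equals $u^{(g)}=(\mathbbm{1}-S_{\mathrm{D}}^{-1}\widetilde{S})g$ by \eqref{eq:ug}.

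For the domain identity I would prove the two inclusions. Each such limit equals $u^{(g)}$, which by \eqref{eq:ug} and the $T$-parametrisation \eqref{eq:ST-2again} is exactly the $\ker S^*$-component of $g$ in the Vi\v{s}ik--Birman decomposition, hence lies in $\mathcal{D}(T)$; this gives one inclusion. Conversely, given $u\in\mathcal{D}(T)$, the vector $g:=S_{\mathrm{D}}^{-1}Tu+u$ belongs to $\mathcal{D}(\widetilde{S})$ (taking $f=0$ and $w=0$ in \eqref{eq:ST-2again}) and produces $u^{(g)}=u$, giving the reverse inclusion. Thus the set on the right of \eqref{eq:TfromU} is precisely $\mathcal{D}(T)$.

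For the matrix-element formula I would combine the two convergences
\[
 (\mathbbm{1}-S_{\mathrm{D}}^{-1}\widetilde{S})(u_\varepsilon^{(g)}-U_\varepsilon u_\varepsilon^{(g)})\to u^{(g)}\,,\qquad \ii\,\varepsilon\,(u_\varepsilon^{(g)}+U_\varepsilon u_\varepsilon^{(g)})\to Tu^{(g)}+w^{(g)}\,,
\]
the latter being \eqref{eq:iepsuepsplusveps}. Pulling the scalar $\ii\,\varepsilon$ into the (linear) second slot of the inner product and invoking joint continuity of $\langle\cdot,\cdot\rangle_{\cH}$, the right-hand side of the claimed formula converges to $\langle u^{(g)},Tu^{(g)}+w^{(g)}\rangle_{\cH}$. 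Since $u^{(g)}\in\mathcal{D}(T)$ while $w^{(g)}\in\ker S^*\cap\mathcal{D}(T)^\perp$, the cross term vanishes, leaving $\langle u^{(g)},Tu^{(g)}\rangle_{\cH}$, as claimed.

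The argument is thus almost entirely bookkeeping; the single point needing care -- and the place where I expect the only real obstacle -- is the passage to the limit inside the inner product. Here one factor is genuinely unbounded, as $u_\varepsilon^{(g)}$ and $U_\varepsilon u_\varepsilon^{(g)}$ each diverge like $\varepsilon^{-1}$. The computation is legitimate precisely because the two divergences are organised so that $\ii\,\varepsilon\,(u_\varepsilon^{(g)}+U_\varepsilon u_\varepsilon^{(g)})$ converges while the complementary combination $(\mathbbm{1}-S_{\mathrm{D}}^{-1}\widetilde{S})(u_\varepsilon^{(g)}-U_\varepsilon u_\varepsilon^{(g)})$ also converges; I would therefore be explicit that the inner product is evaluated as a product of two norm-convergent factors before passing to the limit, rather than manipulating the separately divergent pieces.
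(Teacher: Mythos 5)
Your proof is correct and takes essentially the same approach as the paper's: both read the theorem off from the limits \eqref{eq:limtoug} and \eqref{eq:iepsuepsplusveps} established in the preceding theorem, and both use the orthogonality $w^{(g)}\in\ker S^*\cap\mathcal{D}(T)^\perp$ to reduce $\langle u^{(g)},Tu^{(g)}+w^{(g)}\rangle_{\cH}$ to $\langle u^{(g)},Tu^{(g)}\rangle_{\cH}$. Your extra details --- the remark that $\widetilde{S}$ and $S^*$ agree on $(\mathbbm{1}-U_\varepsilon)u_\varepsilon^{(g)}\in\mathcal{D}(\widetilde{S})$, the explicit reverse inclusion for $\mathcal{D}(T)$ via $g=S_{\mathrm{D}}^{-1}Tu+u$, and the care about passing to the limit with two norm-convergent factors --- merely spell out steps the paper leaves implicit.
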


 \begin{proof}
  The vectors that form $\mathcal{D}(T)$ are identified by the limit \eqref{eq:limtoug}, in view of \eqref{eq:gdecomposedaccordingly}. Moreover, \eqref{eq:ST-2again} and \eqref{eq:gdecomposedaccordingly} imply
  \[
   \langle u^{(g)},Tu^{(g)}\rangle_{\cH}\;=\;\langle u^{(g)},Tu^{(g)}+w^{(g)}\rangle_{\cH}\,,
  \]
   and expressing the two entries of the scalar product in the r.h.s.~above by means of the limits \eqref{eq:limtoug} and \eqref{eq:iepsuepsplusveps} yields the second line of \eqref{eq:TfromU}.
 \end{proof}

 \begin{remark}
  In conclusion, it is worth remarking the following point of view, that we kept in the course of our analysis.
  The `absolute' and `relative' self-adjoint extension parametrisations of a symmetric operator satisfying \eqref{eq:assumptions} have extension labels, the operators $U^{(\widetilde{S})}$ and $T^{(\widetilde{S})}$ respectively, which are uniquely identified by each extension $\widetilde{S}$, and as such they identify each other. Observe that this fact involves two extension parameters that not only are different in nature (a unitary vs a self-adjoint, acting in different spaces), but also have different conceptual roles in the respective classification schemes: in the `relative' one, $T^{(\widetilde{S})}$ identifies $\widetilde{S}$ in terms of the distinguished extension $S_\mathrm{D}$, whereas in the `absolute' classification $U^{(\widetilde{S})}$ parametrises $\widetilde{S}$ with no a priori reference. Thanks to exploiting the two extension parametrisations respectively at the spectral points $z=\ii\varepsilon$ and $z=0$, namely \eqref{eq:vNparameps} and \eqref{eq:ST-2again}, we could find quantitative limits as $\varepsilon\downarrow 0$ that connect for each self-adjoint extension the corresponding operator parameters $U_\varepsilon^{(\widetilde{S})}$ and $T^{(\widetilde{S})}$, namely  
  \eqref{eq:limuepsmUueps}, \eqref{eq:1mUepslim2}, and \eqref{eq:TfromU}. Of course, the choice of generic spectral points $z$ and $0$, while not yielding explicit limits as the above ones, does not alter the fact that to each $T^{(\widetilde{S})}$ one associates uniquely the corresponding $U^{(\widetilde{S})}$, and vice versa. The (implicit) correspondence is clear from \eqref{eq:vNparam} and \eqref{eq:ST-2} and reads:
  \begin{itemize}
   \item given $T$, and running $u$ in $\mathcal{D}(T)$, as well as $w$ in $\mathcal{D}(T)\cap \ker S^*$ and $f$ in $\mathcal{D}(\overline{S})$, build all possible $g=f+S_{\mathrm{D}}^{-1}(Tu+w)+u$;
   \item correspondingly, running over all such $g$'s, $P_{\ker(S^*-z\mathbbm{1})}(\widetilde{S}-\overline{z}\mathbbm{1})g$ exhausts $\ker(S^*-z\mathbbm{1})$ and the action of $U^{(\widetilde{S})}:\ker(S^*-z\mathbbm{1})\to\ker(S^*-\overline{z}\mathbbm{1})$ on any such vector of  $\ker(S^*-z\mathbbm{1})$ is finally given by \eqref{eq:Uepsreconstr}.   
  \end{itemize}
 \end{remark}

 \section{Two examples}\label{eq:examples}
 
 In the first example of this Section we want to elucidate the limits \eqref{eq:uepsUepsgraphn}-\eqref{eq:fepsfgraphn}.

 Consider the operator $S$ acting in the Hilbert space $\cH=L^2(\mathbb{R}^+)$ as
 \begin{equation}\label{eq:defSex1}
  \mathcal{D}(S)\;=\;C^\infty_c(\mathbb{R}^+)\,,\qquad S=-\frac{\ud^2}{\ud x^2}+\mathbbm{1}\,.
 \end{equation}
  It is standard to see (see, e.g., \cite[Section 6.2.2]{GTV-2012}, \cite[Example 14.15]{schmu_unbdd_sa}, \cite[Section 7.1]{GMO-KVB2017}) that $S$ is symmetric, densely defined, lower semi-bounded with $\mathfrak{m}(S)=1$, and
\begin{eqnarray}
 \mathcal{D}(\overline{S})\!&=&\! H^2_0(\mathbb{R}^+)\;=\;\left\{ f\in C^1(\overline{\mathbb{R}^+})\cap L^2(\mathbb{R}^+)\left|
 \begin{array}{c}
  f'\in AC(\overline{\mathbb{R}^+})\,, \\
  f(0)=0=f'(0)
 \end{array}\!
\right.\right\}, \label{eq:domSclex1} \\
\mathcal{D}(S_{\mathrm{F}})\!&=&\! H^2(\mathbb{R}^+)\cap H^1_0(\mathbb{R}^+)\;=\;\{g\in H^2(\mathbb{R}^+)\,|\,g(0)=0\}\,, \label{eq:domSFex1}\\
\mathcal{D}(S^*)\!&=&\! H^2(\mathbb{R}^+)\,,
\end{eqnarray}
 the operator $S^*$ and its symmetric restrictions acting again as $S^*=-\frac{\ud^2}{\ud x^2}+\mathbbm{1}$, in general now as a weak derivative, as compared to the classical derivative in \eqref{eq:defSex1}. Moreover, $S$ has unit deficiency index, with deficiency spaces
 \begin{equation}\label{eq:defsp-ex1}
  \ker S^*\;=\;\mathrm{span}\{e^{-x}\}\,,\qquad \ker (S^*\mp\ii\varepsilon)\;=\;\mathrm{span}\{e^{-x\sqrt{1\mp\ii\varepsilon}}\}\,,\quad\varepsilon\,>\,0\,,
 \end{equation}
 and the action of $S_\mathrm{F}^{-1}$ on $\ker S^*$ is given by
 \begin{equation}\label{eq:SFactionex1}
  S_\mathrm{F}^{-1}e^{-x}\;=\;\frac{1}{2}\,x\,e^{-x}\,.
 \end{equation}

 The self-adjoint extensions of $S$ in $\cH$ are labelled by unitaries $U_\varepsilon$ that, in view of the unital dimensionality of the spaces \eqref{eq:defsp-ex1}, act as multiplication by a phase. Explicitly, upon normalising the spanning vectors for $\ker (S^*\mp\ii\varepsilon)$, each such $U_\varepsilon$ is given by
 \begin{equation}\label{eq:Uepsex1}
  \begin{split}
   U_\varepsilon\,:\ker (S^*-\ii\varepsilon)\,&\to\,\ker (S^*+\ii\varepsilon)\,, \\
   U_\varepsilon\big(\sqrt{2}\,\kappa_\varepsilon\, e^{-x\sqrt{1-\ii\varepsilon}}\big)\;&=\;e^{\ii\theta_\varepsilon}\sqrt{2}\,\kappa_\varepsilon\,e^{-x\sqrt{1+\ii\varepsilon}}
  \end{split}
 \end{equation}
  for some $\theta_\varepsilon\in[0,2\pi)$, where
  \begin{equation}\label{eq:kappaeps}
   \kappa_\varepsilon\;:=\;(1+\varepsilon^2)^{\frac{1}{8}}\sqrt{\cos\,\frac{\arctan\varepsilon}{2}}\,.
  \end{equation}

  It is instructive now to identify the operator label $U_\varepsilon$ -- in practice, the real number $\theta_\varepsilon$ -- that selects the Friedrichs extension $S_{\mathrm{F}}$. Indeed, in the von Neumann extension parametrisation \eqref{eq:vNparameps} $S_{\mathrm{F}}$ and its extension label have no distinguished position.

  Decomposing an arbitrary $g\in\mathcal{D}(S_{\mathrm{F}})$ as in \eqref{eq:gdecompeps}, one has, in view of \eqref{eq:Uepsex1},
  \begin{equation}\label{eq:gexpex1}
   g\;=\;f_\varepsilon^{(g)}+c_\varepsilon^{(g)}\sqrt{2}\,\kappa_\varepsilon\,e^{-x\sqrt{1-\ii\varepsilon}}-e^{\ii\theta_\varepsilon}\,c_\varepsilon^{(g)}\sqrt{2}\,\kappa_\varepsilon\,e^{-x\sqrt{1+\ii\varepsilon}}
  \end{equation}
  for some $f_\varepsilon^{(g)}\in\mathcal{D}(\overline{S})$ and $c_\varepsilon^{(g)}\in\mathbb{C}$. On account of \eqref{eq:domSFex1} one must have $g(0)=0$; imposing this in \eqref{eq:gexpex1}, and exploiting the regularity properties \eqref{eq:domSclex1} for $f_\varepsilon^{(g)}$, one finds
  \begin{equation}\label{eq:thetaepsex1}
   g\,=\,0\qquad\Rightarrow\qquad e^{\ii\theta_\varepsilon}\,\equiv\,1 \qquad\Rightarrow\qquad \theta_\varepsilon\,\equiv\,0
  \end{equation}
   (a condition \emph{independent} on $g$). This shows that the extension label $U_\varepsilon^{(S_{\mathrm{F}})}$ relative to the Friedrichs extension of $S$ is the unitary \eqref{eq:Uepsex1} whose $\theta_\varepsilon^{(S_{\mathrm{F}})}$ is given by \eqref{eq:thetaepsex1}. As a consequence, the decomposition \eqref{eq:gexpex1} for a generic $g\in\mathcal{D}(S_{\mathrm{F}})$ reads
     \begin{equation}\label{eq:gexpex1-2}
   g\;=\;f_\varepsilon^{(g)}+\sqrt{2}\,c_\varepsilon^{(g)}\kappa_\varepsilon\big(e^{-x\sqrt{1-\ii\varepsilon}}-e^{-x\sqrt{1+\ii\varepsilon}}\big)\,,\qquad g\in\mathcal{D}(S_{\mathrm{F}})\,.
  \end{equation}
  
  It is also instructive to compare the two extension parametrisations \eqref{eq:vNparameps} and \eqref{eq:ST-2again}, in the limit $\varepsilon\downarrow 0$, for the case of the self-adjoint extension $S_{\mathrm{F}}$. Recall that in the latter scheme, the extension parameter for $S_{\mathrm{F}}$ is canonically identified, and is precisely `$T=\infty$' ($\mathcal{D}(T)=\{0\}$). Based on this, and on the explicit action
  \eqref{eq:SFactionex1} of $S_\mathrm{F}^{-1}$ on $\ker S^*$ in this case, any $g\in\mathcal{D}(S_{\mathrm{F}})$ decomposes as 
  \begin{equation}\label{gSfex1}
   g\;=\;f^{(g)}+\frac{\,c^{(g)}}{2}\,x\,e^{-x}
  \end{equation}
  for some $f^{(g)}\in\mathcal{D}(\overline{S})$ and $c^{(g)}\in\mathbb{C}$. By equating \eqref{eq:gexpex1-2} and \eqref{gSfex1} for the same arbitrary $g\in\mathcal{D}(S_{\mathrm{F}})$ one finds
  \begin{equation}
   \begin{split}
     \mathcal{D}(\overline{S})\;\ni\;h_\varepsilon^{(g)}\;&:=\;f_\varepsilon^{(g)}-f^{(g)} \\
     &\,=\;\sqrt{2}\,c_\varepsilon^{(g)}\kappa_\varepsilon\big(e^{-x\sqrt{1-\ii\varepsilon}}-e^{-x\sqrt{1+\ii\varepsilon}}\big)-\frac{1}{2}\,c^{(g)}x\,e^{-x}
   \end{split}
  \end{equation}
   and for $h_\varepsilon^{(g)}$ equation \eqref{eq:domSclex1} above prescribes both $h_\varepsilon^{(g)}(0)=0$, which has been already implemented by means of \eqref{eq:thetaepsex1}, and
   \[
    0\;=\;(h_\varepsilon^{(g)})'(0)\;=\;\sqrt{2}\,c_\varepsilon^{(g)}\kappa_\varepsilon(-\sqrt{1-\ii\varepsilon}+\sqrt{1+\ii\varepsilon})-\frac{\,c^{(g)}}{2}\,,
   \]
  whence
  \begin{equation}\label{eq:fepscepsfc-ex1}
   \begin{split}
    c_\varepsilon^{(g)}\;&=\;c^{(g)}\,\frac{\,\sqrt{1+\ii\varepsilon}+\sqrt{1-\ii\varepsilon}\,}{\,2^{\frac{5}{2}}\,\ii\,\varepsilon\,\kappa_\varepsilon}\,, \\
    f_\varepsilon^{(g)}\;&=\;g-\frac{\,c^{(g)}}{2}\,\frac{\,\sqrt{1+\ii\varepsilon}+\sqrt{1-\ii\varepsilon}\,}{2}\,\frac{\,e^{-x\sqrt{1-\ii\varepsilon}}+e^{-x\sqrt{1+\ii\varepsilon}}\,}{\ii\,\varepsilon}
   \end{split}
  \end{equation}
  (the latter equation following from the former and from \eqref{eq:gexpex1-2}). Thus, \eqref{eq:fepscepsfc-ex1} connects at any finite $\varepsilon>0$ the components of $g$ in the von Neumann representation of $\mathcal{D}(S_{\mathrm{F}})$ with the components of the same $g$ in the canonical representation induced by the boundary triplet \eqref{eq:thetriplet}.

  Observe that $c_\varepsilon^{(g)}\sim\varepsilon^{-1}$, which confirms that the representative
  \[
   u_\varepsilon^{(g)}\;=\;c_\varepsilon^{(g)}\sqrt{2}\,\kappa_\varepsilon\,e^{-x\sqrt{1-\ii\varepsilon}}
  \]
  of $g$ in the decomposition \eqref{eq:gdecompeps} (see \eqref{eq:gexpex1} above) does indeed blow up in the $L^2$-norm as $\varepsilon\downarrow 0$, as commented in general in the end of Section \ref{sec:impretriplet}, and so too does 
  \[
   U_\varepsilon^{(S_{\mathrm{F}})}u_\varepsilon^{(g)}\;=\;c_\varepsilon^{(g)}\sqrt{2}\,\kappa_\varepsilon\,e^{-x\sqrt{1+\ii\varepsilon}}\,.
  \]
  Such a divergence cancels out in the difference
  \[
   u_\varepsilon^{(g)}-U_\varepsilon^{(S_{\mathrm{F}})}u_\varepsilon^{(g)}\;=\;\frac{\,c^{(g)}}{2}\,\frac{\,\sqrt{1+\ii\varepsilon}+\sqrt{1-\ii\varepsilon}\,}{2}\,\frac{\,e^{-x\sqrt{1-\ii\varepsilon}}-e^{-x\sqrt{1+\ii\varepsilon}}\,}{\ii\,\varepsilon}
  \]
  (obtained by comparing the second equation in \eqref{eq:fepscepsfc-ex1} with \eqref{eq:gdecompeps}): indeed, from the $x$-point-wise limit $(\ii\varepsilon)^{-1}(e^{-x\sqrt{1-\ii\varepsilon}}-e^{-x\sqrt{1+\ii\varepsilon}})\xrightarrow{\varepsilon\downarrow 0}x e^{-x}$ (where the divergence cancellation actually occurs) and from dominated convergence, one has 
  \begin{equation}
   u_\varepsilon^{(g)}-U_\varepsilon^{(S_{\mathrm{F}})}u_\varepsilon^{(g)}\;\xrightarrow{\;\varepsilon\downarrow 0\;}\;\frac{1}{2}\,c^{(g)}x\,e^{-x}\qquad \textrm{in } L^2(\mathbb{R}^+)\,,
  \end{equation}
  consistently with the general limit \eqref{eq:limuepsmUueps}, whence also, from \eqref{eq:fepscepsfc-ex1},
    \begin{equation}\label{eq:epsconv-ex1}
   \begin{split}
%     c_\varepsilon^{(g)}\;&\xrightarrow{\;\varepsilon\downarrow 0\;}\; c^{(g)}\,\frac{\,\sqrt{1+\ii\varepsilon}+\sqrt{1-\ii\varepsilon}\,}{\,2^{\frac{5}{2}}\,\ii\,\varepsilon\,(1-\ii\varepsilon)^{\frac{1}{4}}}\,, \\
    f_\varepsilon^{(g)}\;&\xrightarrow{\;\varepsilon\downarrow 0\;}\;g-\frac{1}{2}\,c^{(g)}x\,e^{-x}\;=\;f^{(g)}\qquad \textrm{in } L^2(\mathbb{R}^+)\,,
   \end{split}
  \end{equation}
 consistently with \eqref{eq:fepstofingen}. The occurrence of the last two limits also in the stronger graph norm of $S_{\mathrm{F}}$, which in this case is the $H^2$-norm, is checked in a completely analogous manner, consistently with the general result \eqref{eq:uepsUepsgraphn}-\eqref{eq:fepsfgraphn}.

 In the second example we want to elucidate the scope of Theorem \ref{thm:recoverT}, and in particular the limits in \eqref{eq:TfromU}. Clearly, a meaningful example of this sort requires a symmetric operator with deficiency indices $\geqslant 2$. 
%  , in order for the extension label $T$ to act on a \emph{proper} subspace of $\ker S^*$.

 To this aim, with respect to the Hilbert space $\cH$ given by the orthogonal direct sum
 \begin{equation}\label{eq:Hleftright}
  \cH\;:=\; L^2(\mathbb{R}^-)\oplus L^2(\mathbb{R}^+)\,,
 \end{equation}
 write $g\in\cH$ as %$g=\binom{ g_-}{g_+}$ with $g_\pm(x):-$ and 
 \begin{equation}
  g\;=\; \begin{pmatrix} g_- \\ g_+ \end{pmatrix}\,,\qquad g_\pm(x)\,:=\,g(x)\quad \textrm{for } x\in\mathbb{R}^\pm\,,
 \end{equation}
 and consider the operator
 \begin{equation}
  \begin{split}
  \mathcal{D}(S)\;&:=\;C^\infty_c(\mathbb{R}^-)\boxplus C^\infty_c(\mathbb{R}^+)\,, \\
  S\;&:=\;\Big(-\frac{\ud^2}{\ud x^2}+\mathbbm{1}\Big)\oplus\Big(-\frac{\ud^2}{\ud x^2}+\mathbbm{1}\Big)\,,
  \end{split}
 \end{equation}
 with the customary notation `$\boxplus$' replacing `$\oplus$' for the orthogonal sum of non-closed subspaces, with respect to the decomposition \eqref{eq:Hleftright} (see, e.g., \cite[Chapter 1]{GM-SelfAdj_book-2022}). $S$ is symmetric, densely defined, lower semi-bounded with $\mathfrak{m}(S)=1$, and reduced with respect to the decomposition \eqref{eq:Hleftright}, and moreover
 \begin{eqnarray}
  \mathcal{D}(S_{\mathrm{F}})  \!&=&\! \big( H^2(\mathbb{R}^-)\cap H^1_0(\mathbb{R}^-)\big) \boxplus \big( H^2(\mathbb{R}^+)\cap H^1_0(\mathbb{R}^+)\big)\,, \\
  \mathcal{D}(S^*)  \!&=&\! H^2(\mathbb{R}^-) \boxplus H^2(\mathbb{R}^+)\,.
 \end{eqnarray}
  $S$ has deficiency indices equal to $2$, with deficiency spaces
  \begin{eqnarray}
   \ker S^* \!&=&\! \bigg\{ \begin{pmatrix} c_-\, e^x \\ c_+\, e^{-x} \end{pmatrix}\bigg| \,c_\pm\in\mathbb{C}\bigg\} \,, \\
   \ker (S^*\mp\ii\varepsilon\mathbbm{1})  \!&=&\! \bigg\{ \begin{pmatrix} a_-\, e^{x\sqrt{1\mp\ii\varepsilon}} \\  a_+ \,e^{-x\sqrt{1\mp\ii\varepsilon}} \end{pmatrix}\bigg|\,a_\pm\in\mathbb{C}\bigg\}
  \end{eqnarray}
 for arbitrary $\varepsilon>0$.
  
  Next, consider the family of operators  $S_\alpha:= S^*\upharpoonright\mathcal{D}(S_\alpha)$, $\alpha\in\mathbb{R}$, with 
  % $S_\beta:= S^*\upharpoonright\mathcal{D}(S_\beta)$, $\beta\in\mathbb{R}$, with 
  \begin{equation}\label{eq:famSalpha}
   \mathcal{D}(S_\alpha)\;:=\;\left \{ g\in H^2(\mathbb{R}^-) \boxplus H^2(\mathbb{R}^+)\left| 
   \begin{array}{c}
    g_+(0)=g_-(0)=:g_0\,, \\
    g_+'(0)-g_-'(0)= \alpha g_0
   \end{array}\!
   \right.\right\}.
  \end{equation}
   One can check in various ways (definition of self-adjointness, criteria of self-ad\-joint\-ness, self-adjoint extension schemes) that each $S_\alpha$ is an actual self-adjoint extension of $S$. Let $U_\varepsilon^{(\alpha)}$ be the extension parameter associated with $S_\alpha$ in the von Neumann scheme \eqref{eq:vNparameps} and relative to the deficiency spaces $\ker (S^*\mp\ii\varepsilon\mathbbm{1})$. Its action, given by \eqref{eq:uvareps}-\eqref{eq:Uuvareps} above, will be computed in a moment. The goal is to apply Theorem \ref{thm:recoverT} to recover, from the knowledge of $U_\varepsilon^{(\alpha)}$, domain and action of the operator label $T^{(\alpha)}$ associated with $S_\alpha$ in the Kre{\u\i}n-Vi\v{s}ik-Birman extension parametrisation \eqref{eq:ST-2again}. One has then to set up the limits in \eqref{eq:TfromU}.

   The orthogonal projections onto the two deficiency subspaces at the spectral points $\pm\ii\varepsilon$ are given by
   \begin{equation}\label{eq:Projectionsieps}
   \begin{split}
    P_{\ker(S^*\mp\ii\varepsilon\mathbbm{1})}\;& =\; \big( 2\,\kappa_\varepsilon^2\, \big|e^{x\sqrt{1\mp\ii\varepsilon}}\big\rangle\langle e^{x\sqrt{1\mp\ii\varepsilon}}\big|\oplus\mathbb{O}\big) \\
    &\qquad +\big( \mathbb{O}\oplus 2\,\kappa_\varepsilon^2\, \big|e^{-x\sqrt{1\mp\ii\varepsilon}}\big\rangle\langle e^{-x\sqrt{1\mp\ii\varepsilon}}\big|\big)\,,
    \end{split}
   \end{equation}
   with $\kappa_\varepsilon$ given by \eqref{eq:kappaeps} and with the operator orthogonal sums in \eqref{eq:Projectionsieps} above, as well as throughout the following, referring to the Hilbert space orthogonal decomposition \eqref{eq:Hleftright}. Thus, for arbitrary $g\in\mathcal{D}(S_\alpha)$,
   \begin{equation}\label{eq:PSieps}
   \begin{split}
         P_{\ker(S^*\mp\ii\varepsilon\mathbbm{1})}&(S_\alpha\pm\ii\varepsilon\mathbbm{1})\,g  \\
         &=\;2\kappa_\varepsilon^2\begin{pmatrix}
             e^{x\sqrt{1\mp\ii\varepsilon}}\big\langle e^{x\sqrt{1\mp\ii\varepsilon}}\,,-g_-''+(1\pm\ii\varepsilon)g_-\big\rangle_- \\
             e^{-x\sqrt{1\mp\ii\varepsilon}}\big\langle e^{-x\sqrt{1\mp\ii\varepsilon}}\,,-g_+''+(1\pm\ii\varepsilon)g_+\big\rangle_+
            \end{pmatrix}\,,
   \end{split}
   \end{equation}
   with the notation $\langle\cdot,\cdot\rangle_{\pm}$ for the scalar product of $L^2(\mathbb{R}^\pm)$. Through \eqref{eq:uvareps}-\eqref{eq:Uuvareps} this provides the action of $U_\varepsilon^{(\alpha)}$, and also allows to compute
  \[
   \begin{split}
    \big(&u_\varepsilon^{(g)}-U_\varepsilon^{(\alpha)}u_\varepsilon^{(g)}\big)_+ \\
    &=\;(\ii\varepsilon)^{-1}\kappa_\varepsilon^2\Big( e^{-x\sqrt{1-\ii\varepsilon}}\big\langle  e^{-x\sqrt{1-\ii\varepsilon}}\,,-g_+''+(1+\ii\varepsilon)g_+\big\rangle_+  \\
    &\qquad\qquad\qquad - e^{-x\sqrt{1+\ii\varepsilon}}\big\langle  e^{-x\sqrt{1+\ii\varepsilon}}\,,-g_+''+(1-\ii\varepsilon)g_+\big\rangle_+\Big) \\
    &=\;\kappa_\varepsilon^2\,e^{-x\sqrt{1-\ii\varepsilon}}\big \langle e^{-x\sqrt{1-\ii\varepsilon}}\,, 2 g_+\big\rangle_+ \\
    &\qquad +\kappa_\varepsilon^2\,e^{-x\sqrt{1-\ii\varepsilon}}\Big\langle\frac{\,e^{-x\sqrt{1-\ii\varepsilon}}-e^{-x\sqrt{1+\ii\varepsilon}}\,}{-\ii\varepsilon}\,,-g_+''+(1-\ii\varepsilon)g_+\Big\rangle_{\!+} \\
    &\qquad + \kappa_\varepsilon^2\,\frac{\,e^{-x\sqrt{1-\ii\varepsilon}}-e^{-x\sqrt{1+\ii\varepsilon}}\,}{\ii\varepsilon}\,\big\langle  e^{-x\sqrt{1+\ii\varepsilon}}\,,-g_+''+(1-\ii\varepsilon)g_+\big\rangle_+ \\
    &\xrightarrow{\;\varepsilon\downarrow 0\;}\; 2\,e^{-x}\langle e^{-x},g_+\rangle_+ - e^{-x}\langle x e^{-x}\,,-g_+''+g_+\rangle_+ + x e^{-x}\langle e^{-x}\,,-g_+''+g_+\rangle_+\,,
   \end{split}
  \] 
  having used again the $(\ii\varepsilon)^{-1}(e^{-x\sqrt{1-\ii\varepsilon}}-e^{-x\sqrt{1+\ii\varepsilon}})\xrightarrow{\varepsilon\downarrow 0}x e^{-x}$ point-wise in $x$. By dominated convergence, the above limit holds in the norm of $L^2(\mathbb{R}^+)$. A completely analogous computation holds for $(u_\varepsilon^{(g)}-U_\varepsilon^{(\alpha)}u_\varepsilon^{(g)})_-$\,, so that
  \begin{equation}\label{eq:ex2prel1}
  \begin{split}
      u_\varepsilon^{(g)}-U_\varepsilon^{(\alpha)}u_\varepsilon^{(g)}\;&\xrightarrow{\;\varepsilon\downarrow 0\;}\;
      \begin{pmatrix}
       -x e^x\,\langle e^x\,,-g_-''+g_-\rangle_- \\
        x e^{-x}\,\langle e^{-x}\,,-g_+''+g_+\rangle_+
      \end{pmatrix} \\
      &\qquad\quad +
      \begin{pmatrix}
       e^x\big(\langle 2\,e^x\,,g_-\rangle_- - \langle -x e^x\,,-g_-''+g_-\rangle_-\big) \\
       e^{-x}\big(\langle 2\,e^{-x}\,,g_+\rangle_+ - \langle x e^{-x}\,,-g_+''+g_+\rangle_+\big)
      \end{pmatrix}  
  \end{split}
  \end{equation}
  in the norm of $\cH$.
  Along the same line, from \eqref{eq:uvareps}-\eqref{eq:Uuvareps}  and \eqref{eq:PSieps} one finds
  \begin{equation}\label{eq:ex2Sstarplus}
  \begin{split}
    S^*\big(u_\varepsilon^{(g)}-U_\varepsilon^{(\alpha)}u_\varepsilon^{(g)}\big)\;& =\;\ii\,\varepsilon \big(u_\varepsilon^{(g)}+U_\varepsilon^{(\alpha)}u_\varepsilon^{(g)}\big) \\
    &\xrightarrow{\;\varepsilon\downarrow 0\;}\; 
          \begin{pmatrix}
        e^x\,\langle 2\,e^x\,,-g_-''+g_-\rangle_- \\
        e^{-x}\,\langle 2\,e^{-x}\,,-g_+''+g_+\rangle_+
      \end{pmatrix}\,,
  \end{split}
  \end{equation}
  and by continuity of $S_{\mathrm{F}}^{-1}$ (and \eqref{eq:SFactionex1})
  \begin{equation}\label{eq:ex2prel2}
   S_{\mathrm{F}}^{-1}S^*\big(u_\varepsilon^{(g)}-U_\varepsilon^{(\alpha)}u_\varepsilon^{(g)}\big)\;\xrightarrow{\;\varepsilon\downarrow 0\;} \begin{pmatrix}
       -x e^x\,\langle e^x\,,-g_-''+g_-\rangle_- \\
        x e^{-x}\,\langle e^{-x}\,,-g_+''+g_+\rangle_+
      \end{pmatrix}
  \end{equation}
  in the norm of $\cH$.
  Thus, combining \eqref{eq:ex2prel1} and \eqref{eq:ex2prel2},
  \begin{equation}\label{eq:ex2lhslim}
   \begin{split}
       (\mathbbm{1}-S_{\mathrm{F}}^{-1}S^*)&\big(u_\varepsilon^{(g)}-U_\varepsilon^{(\alpha)}u_\varepsilon^{(g)}\big) \\
       &\xrightarrow{\;\varepsilon\downarrow 0\;}\; \begin{pmatrix}
       e^x\big(\langle 2\,e^x\,,g_-\rangle_- - \langle -x e^x\,,-g_-''+g_-\rangle_-\big) \\
       e^{-x}\big(\langle 2\,e^{-x}\,,g_+\rangle_+ - \langle x e^{-x}\,,-g_+''+g_+\rangle_+\big)
      \end{pmatrix}
   \end{split}
  \end{equation}
  in the norm of $\cH$.
%   On the other hand, using again \eqref{eq:PSieps} in \eqref{eq:uvareps}-\eqref{eq:Uuvareps}, one obtains
%   \begin{equation}
%    \begin{split}
%     S*\big(u_\varepsilon^{(g)}-U_\varepsilon^{(\alpha)}u_\varepsilon^{(g)}\big)\;&=\;
%    \end{split}
%   \end{equation}

  Before plugging the limits \eqref{eq:ex2Sstarplus} and \eqref{eq:ex2lhslim} into \eqref{eq:TfromU} it is convenient to suitably re-write the coefficients therein. Concerning \eqref{eq:ex2lhslim}, simple integration by parts yields
  \[
   \begin{split}
   \langle 2\,e^x\,,g_-\rangle_- - \langle -x e^x\,,-g_-''+g_-\rangle_-\;&=\;g_-(0) \,,\\
   \langle 2\,e^{-x}\,,g_+\rangle_+ - \langle x e^{-x}\,,-g_+''+g_+\rangle_+\;&=\;g_+(0)\,,   
   \end{split}
  \]
  whence 
  \begin{equation}\label{eq:ex2finallhs}
    (\mathbbm{1}-S_{\mathrm{F}}^{-1}S^*)\big(u_\varepsilon^{(g)}-U_\varepsilon^{(\alpha)}u_\varepsilon^{(g)}\big) \;\xrightarrow{\;\varepsilon\downarrow 0\;}\; g_0 \begin{pmatrix}
       e^x \\
       e^{-x}
      \end{pmatrix}.
  \end{equation}
  Therefore, owing to the first formula in \eqref{eq:TfromU},
  \begin{equation}\label{eq:ex1foundDT}
   \mathcal{D}(T)\;=\;\left\{\left. g_0 \begin{pmatrix}
       e^x \\
       e^{-x}
      \end{pmatrix} \right| g_0\in\mathbb{C} \right\}\,,
  \end{equation}
  that is, the Hilbert subspace of $\ker S^*$ where the extension label $T^{(\alpha)}$ of $S_\alpha$ acts in is the \emph{one-dimensional} span of $e^x\oplus e^{-x}$. On such a space the self-adjoint $T^{(\alpha)}$ necessarily acts as multiplication by a real number. The precise value of such a number will be determined in a moment by means of the second formula in \eqref{eq:TfromU}. Observe also that 
  \begin{equation}\label{eq:ex1foundDTperp}
   \ker S^*\cap \mathcal{D}(T)^\perp\;=\;\left\{\left. d \begin{pmatrix}
       -e^x \\
       e^{-x}
      \end{pmatrix} \right| d\in\mathbb{C} \right\}.
  \end{equation}
  Concerning \eqref{eq:ex2Sstarplus}, instead, integration by parts and \eqref{eq:famSalpha} yield
  \[
   \begin{split}
    \langle 2\,e^x\,,-g_-''+g_-\rangle_- \;&=\; 2 ( g_-(0)-g'_-(0)) \\
    &=\; (2+\alpha) g_0 -(2g_-'(0)+\alpha g_0)\\
    &=\; (2+\alpha) g_0 -( 2g_+'(0)-\alpha g_0)\,, \\
    \langle 2\,e^{-x}\,,-g_+''+g_+\rangle_+ \;&=\; 2 (g_+(0)+g'_+(0)) \\
    &=\; (2+\alpha) g_0 +( 2g_+'(0)-\alpha g_0)\,,
   \end{split}
  \]
  whence
  \begin{equation}\label{eq:ex2finalrhs}
   \ii\,\varepsilon \big(u_\varepsilon^{(g)}+U_\varepsilon^{(\alpha)}u_\varepsilon^{(g)}\big) \;\xrightarrow{\;\varepsilon\downarrow 0\;}\; (2+\alpha) g_0\begin{pmatrix}
       e^x \\
       e^{-x}
      \end{pmatrix}  +( 2g_+'(0)-\alpha g_0) \begin{pmatrix}
       -e^x \\
       e^{-x}
      \end{pmatrix} .
  \end{equation}

  Pick now $u^{(g)}\in \mathcal{D}(T)$. Owing to \eqref{eq:ex1foundDT}, \eqref{eq:ex1foundDTperp}, and \eqref{eq:ST-2again}, $u^{(g)}$ corresponds to functions $g\in\mathcal{D}(S_\alpha)$ of the form 
  \[
   g\;=\;\begin{pmatrix} f^{(g)}_- \\ f^{(g)}_+ \end{pmatrix}+S_{\mathrm{F}}^{-1}\left[ T^{(\alpha)} c^{(g)} \begin{pmatrix}
       e^x \\
       e^{-x}
      \end{pmatrix}+ d^{(g)} \begin{pmatrix}
       -e^x \\
       e^{-x}
      \end{pmatrix}\right] +  c^{(g)} \begin{pmatrix}
       e^x \\
       e^{-x}
      \end{pmatrix}
  \]
  for $f^{(g)}_-\oplus f^{(g)}_+\in\mathcal{D}(\overline{S})$ and $c^{(g)},d^{(g)}\in\mathbb{C}$, whence $u^{(g)}=c^{(g)}\binom{e^x}{e^{-x}}$, $c^{(g)}=g_0$, and 
  \begin{equation}\label{eq:uu}
      \langle u^{(g)},u^{(g)}\rangle_{\cH}\;=\;|g_0|^2\,.
  \end{equation}
   On the other hand, the second formula in \eqref{eq:TfromU}, on account of \eqref{eq:ex2finallhs} and \eqref{eq:ex2finalrhs}, yields
   \begin{equation}\label{eq:uTu}
    \begin{split}
      \langle u^{(g)},&T^{(\alpha)}u^{(g)}\rangle_{\cH} \\
      &=\;\left \langle g_0 \begin{pmatrix}
       e^x \\
       e^{-x}
      \end{pmatrix} , (2+\alpha) g_0\begin{pmatrix}
       e^x \\
       e^{-x}
      \end{pmatrix}  +( 2g_+'(0)-\alpha g_0) \begin{pmatrix}
       -e^x \\
       e^{-x}
      \end{pmatrix}\right\rangle_{\!\cH} \\
      &=\;(2+\alpha) |g_0|^2\,.
    \end{split}
   \end{equation}
  Comparing \eqref{eq:uu} and \eqref{eq:uTu} finally shows that $T^{(\alpha)}$ is the operator of multiplication by $(2+\alpha)$ acting on the domain $\mathcal{D}(T)=\mathrm{span}\{ e^x\oplus e^{-x}\}$.

% \bibliographystyle{siam}
% \bibliography{bib_ALE}

\begin{thebibliography}{10}

\bibitem{Akhiezer-Glazman-1961-1993}
{\sc N.~I. Akhiezer and I.~M. Glazman}, {\em {Theory of linear operators in
  {H}ilbert space}}, Dover Publications, Inc., New York, 1993.
\newblock Translated from the Russian and with a preface by Merlynd Nestell,
  Reprint of the 1961 and 1963 translations, Two volumes bound as one.

\bibitem{Behrndt-Hassi-deSnoo-boundaryBook}
{\sc J.~Behrndt, S.~Hassi, and H.~de~Snoo}, {\em {Boundary value problems,
  {W}eyl functions, and differential operators}}, vol.~108 of {Monographs in
  Mathematics}, Birkh{\"a}user/Springer, Cham, 2020.

\bibitem{Birman-1956}
{\sc M.~\v{S}.~Birman}, {\em {On the theory of self-adjoint extensions of positive
  definite operators (Russian)}}, Mat. Sb. N.S., 38(80) (1956), pp.~431--450.

\bibitem{Calkin-1940}
{\sc J.~W. Calkin}, {\em {Symmetric transformations in {H}ilbert space}}, Duke
  Math. J., 7 (1940), pp.~504--508.

\bibitem{GM-SelfAdj_book-2022}
{\sc M.~Gallone and A.~Michelangeli}, {\em {Self-adjoint extension schemes and
  modern applications to quantum Hamiltonians}}, Springer Cham, 2023.
\newblock Foreword by Sergio Albeverio.

\bibitem{GMO-KVB2017}
{\sc M.~Gallone, A.~Michelangeli, and A.~Ottolini}, {\em
  {Kre{\u\i}n-Vi\v{s}ik-Birman self-adjoint extension theory revisited}}, in
  {Mathematical Challenges of Zero Range Physics}, A.~Michelangeli, ed.,
  {INdAM-Springer series, Vol.~42}, Springer International Publishing, 2020,
  pp.~239--304.

\bibitem{GTV-2012}
{\sc D.~M. Gitman, I.~V. Tyutin, and B.~L. Voronov}, {\em {Self-adjoint
  extensions in quantum mechanics}}, vol.~62 of {Progress in Mathematical
  Physics}, Birkh{\"a}user/Springer, New York, 2012.
\newblock General theory and applications to Schr{\"o}dinger and Dirac
  equations with singular potentials.

\bibitem{Gohberg-Markus-1959}
{\sc I.~C. Gohberg and A.~S. Markus}, {\em {Two theorems on the opening between
  subspaces of Banach space}}, Uspekhi Mat. Nauk., 5(89) (1959), pp.~135--140.

\bibitem{Grubb-1968}
{\sc G.~Grubb}, {\em {A characterization of the non-local boundary value
  problems associated with an elliptic operator}}, Ann. Scuola Norm. Sup. Pisa
  (3), 22 (1968), pp.~425--513.

\bibitem{Grubb-DistributionsAndOperators-2009}
\leavevmode\vrule height 2pt depth -1.6pt width 23pt, {\em {Distributions and
  operators}}, vol.~252 of {Graduate Texts in Mathematics}, Springer, New York,
  2009.

\bibitem{Kato-perturbation}
{\sc T.~Kato}, {\em {Perturbation theory for linear operators}}, {Classics in
  Mathematics}, Springer-Verlag, Berlin, 1995.
\newblock Reprint of the 1980 edition.

\bibitem{KM-2015-Birman}
{\sc M.~Khotyakov and A.~Michelangeli}, {\em {Translation and adaptation of
  Birman's paper ``On the theory of self-adjoint extensions of positive
  definite operators''}}, in {Mathematical Challenges of Zero Range Physics},
  A.~Michelangeli, ed., {INdAM-Springer series, Vol.~42}, Springer
  International Publishing, 2020.

\bibitem{Krein-Krasnoselskii-1947}
{\sc M.~G. Kre{\u\i}n and M.~A. Krasnosel{\cprime}ski{\u\i}}, {\em {Fundamental
  theorems on the extension of {H}ermitian operators and certain of their
  applications to the theory of orthogonal polynomials and the problem of
  moments}}, Uspehi Matem. Nauk (N. S.), 2 (1947), pp.~60--106.

\bibitem{Krein-1947}
{\sc M.~G. Kre\u{\i}n}, {\em {The theory of self-adjoint extensions of
  semi-bounded {H}ermitian transformations and its applications. {I}}}, Rec.
  Math. [Mat. Sbornik] N.S., 20(62) (1947), pp.~431--495.

\bibitem{Krein-Krasnoselskii-Milman-1948}
{\sc M.~G. Kre\u{\i}n, M.~A. Krasnosel$'$ski\u{\i}, and D.~Mil$'$man}, {\em
  {Concerning the deficiency numbers of linear operators in Banach space and
  some geometric questions}}, Sbornik Trudov Instit. Mat. Akad. Nauk. Ukr.
  S.S.R.,  (1948), pp.~97--112.

\bibitem{schmu_unbdd_sa}
{\sc K.~Schm{\"u}dgen}, {\em {Unbounded self-adjoint operators on {H}ilbert
  space}}, vol.~265 of {Graduate Texts in Mathematics}, Springer, Dordrecht,
  2012.

\bibitem{Vishik-1952}
{\sc M.~I. Vi\v{s}ik}, {\em {On general boundary problems for elliptic
  differential equations (Russian)}}, Trudy Moskov. Mat. Ob\v s\v c., 1 (1952),
  pp.~187--246.

\bibitem{vonNeumann1930}
{\sc J.~{von Neumann}}, {\em {Allgemeine {E}igenwerttheorie {H}ermitescher
  {F}unktionaloperatoren}}, Math. Ann., 102 (1930), pp.~49--131.

\end{thebibliography}

\def\cprime{$'$}

\end{document}